\newcommand{\NN}{{\mathbb N}}
\def\bege{\begin{equation}} \def\ende{\end{equation}}
\def\begr{\begin{eqnarray}} \def\endr{\end{eqnarray}}
\newcommand{\DD}{{\mathbb D}}
\def\B{\mathcal{B}}
\def\D{\mathbb{D}}
\def\N{\mathbb N}
\def\a{\alpha}
\def\b{\beta}
\def\p{{\prime}}
\def \tp{AT_p^\infty(\alpha)}
\def \p0{AT_{p,0}^\infty(\alpha)}
\def \tq{AT_q^\infty(\beta)}
\def \q0{AT_{q,0}^\infty(\beta)}
\def\begr{\begin{eqnarray}} \def\endr{\end{eqnarray}}
\def\ol{\overline}
\newtheorem{Lemma}{Lemma}[section]
\newtheorem{Theorem}[Lemma]{Theorem}
\newtheorem{Definition}[Lemma]{Definition}
\newcounter{other}            
\begin{document}
	\title[]{Generalized integration operators on  Analytic Tent spaces $\tp$}
	
	\author{Rong Yang, Songxiao Li$^\dagger$ and Lian Hu}
	
	\address{Rong Yang
	\\Institute of Fundamental and Frontier Sciences, University of Electronic Science and Technology of China, 610054, Chengdu, Sichuan, P.R. China.}
	\email{yangrong071428@163.com }

	\address{Songxiao Li\\ Department of Mathematics, Shantou University, 515063, Shantou, Guangdong, P.R. China. } \email{jyulsx@163.com}

	\address{Lian Hu\\  Institute of Fundamental and Frontier Sciences, University of Electronic Science and Technology of China, 610054, Chengdu, Sichuan, P.R. China.  }
	\email{hl152808@163.com}

	\subjclass[2010]{30H99, 47B38}

	\begin{abstract} In this paper, we investigate the boundedness and compactness of generalized integration operators $T_g^{n,k}$ and $S_g^{n,0}$ from analytic tent spaces $AT_p^\infty(\a)$ to $\tq$ when $0<p,q<\infty,\a,\b>-2$.

		\thanks{$\dagger$ Corresponding author.}
		\vskip 3mm \noindent{\it Keywords}: Integration operator, tent space,  Carleson measure.
	\end{abstract}
	
	\maketitle
	
\section{Introduction}
Let $\mathbb{D}$ denote the unit disk on the complex plane $\mathbb{C}$, $\mathbb{T}$ its boundary, and $H(\mathbb{D})$ be the set of
all analytic functions in $\mathbb{D}$. 
Let $\eta \in \mathbb{T}$ and $\zeta>\frac{1}{2}$. The non-tangential region $\Gamma_\zeta(\eta)$ is defined by
$$
\Gamma(\eta)=\Gamma_\zeta(\eta) =\{z \in \mathbb{D}:|z-\eta|<\zeta(1-|z|^2) \}.
$$	
Let $0<p, q<\infty$ and $\a>-2$. The tent space $T_p^q(\alpha)$ consists of all measurable functions $f$ on $\mathbb{D}$ with
\begin{align*}
	\|f\|_{T_p^q(\alpha)}=\left(\int_\mathbb{T}\left(\int_{\Gamma(\eta)}|f(z)|^p(1-|z|^2)^{\a} d A(z)\right)^{\frac{q}{p}}|d \eta|\right)^{\frac{1}{q}}<\infty,
\end{align*}
where $dA(z)=\frac{1}{\pi}dxdy$ is the normalized Lebesgue area measure on $\D$. 

For $p=\infty$ and $0<q<\infty$, the tent space $T_{\infty}^q(\alpha)$ consists of all measurable functions $f$ on $\mathbb{D}$ such that
\begin{align*}
	\|f\|_{T_\infty^q(\alpha)}=\left(\int_\mathbb{T} \left(\operatorname{esssup}_{z \in \Gamma(\eta)}|f(z)|\right)^q|d \eta|\right)^{\frac{1}{q}}<\infty. 
\end{align*}
It is observed that the definition of \( T^q_\infty(\alpha) \) is independent of \(\alpha\). Thus, we denote it simply as $T^q_\infty$.

For $q=\infty$ and $0<p<\infty$, the tent space $T_p^{\infty}(\alpha)$ consists of all measurable functions $f$ on $\mathbb{D}$ for which
$$
\|f\|_{T_p^{\infty}(\alpha)}=\operatorname{esssup} _{\eta \in \mathbb{T}}\left(\sup _{u \in \Gamma(\eta)} \frac{1}{1-|u|^2} \int_{S(u)}|f(z)|^p(1-|z|^2)^{\a+1} d A(z)\right)^{\frac{1}{p}}<\infty,
$$
where
$$
S(r e^{i \theta})=\left\{\lambda e^{i t}: |t-\theta| \leq \frac{1-r}{2},1-\lambda \leq 1-r\right\}
$$
for $r e^{i \theta} \in \mathbb{D} \backslash\{0\}$ and $S(0)=\mathbb{D}$.
Let $T_{p,0}^\infty(\a)$ be the subspace of $T_p^\infty(\a)$ 
consisting of all measurable functions $f$ with
$$\lim_{|u|\to1}\frac1{1-|u|^2}\int\limits_{S(u)}|f(z)|^p(1-|z|^2)^{\alpha+1}dA(z)=0.$$
Denote $T_p^q(\alpha) \cap H(\mathbb{D})$ by $A T_p^q(\alpha)$. In particular, for $\a=0$, we write $AT_p^q$ instead of $AT_p^q(\a)$.

Tent spaces were initially introduced by Coifman, Meyer, and Stein in  \cite{cms}, serving as a pivotal tool for investigating topics within harmonic analysis. They developed an extensive framework that not only addressed the exploration of significant spaces such as Hardy spaces and Bergman spaces but also provided a unified approach to these inquiries. In the definition outlined above, the aperture $\zeta$ of the non-tangential region $\Gamma_\zeta(\eta)$ is implicitly understood rather than explicitly stated, as it is a well-established fact that varying apertures result in the same function space, each equipped with equivalent quasinorms.
The characterization of the Hardy space through the non-tangential maximal function reveals that $AT^q_\infty$ is equivalent to $H^q$, which can be interpreted as the limit of $AT_p^q(\alpha)$ as $p$ tends towards infinity (refer to \cite{z2} for details). It is particularly noteworthy that $AT_p^p(\alpha - 1)$ corresponds to the weighted Bergman space, denoted as $A^p_\alpha$.

Let $\mathbb{N}$ represent the collection of positive integers. Consider $g \in H(\mathbb{D})$, $k \in \mathbb{N} \cup \{0\}$, and $n \in \mathbb{N}$ with the condition $0 \leq k < n$. The generalized integration operator $T_g^{n, k}$ is defined as follows:
\[
T_g^{n, k} f(z) = I^n\left(f^{(k)}(z) g^{(n-k)}(z)\right), \quad \text{for} \quad f \in H(\mathbb{D}).
\]
In this context, $I^n$ denotes the $n$-th iteration of the integration operator $I$, where $I f(z) = \int_0^z f(t) \, dt$.
The operator $T_g^{n, k}$ was initially presented by Chalmoukis \cite{ch}. Specifically, when $n = 1$ and $k = 0$, the formula simplifies to:
\[
T_g^{1,0} f(z) = \int_0^z f(\eta) g'(\eta) \, d\eta = T_g f(z).
\]

The Volterra integration operator \( T_g \) was first defined and analyzed by Pommerenke in \cite{p}, where it was demonstrated that \( T_g \) is bounded on \( H^2 \) if and only if \( g \) belongs to \( BMOA \). Aleman and Siskakis extended this result in \cite{as1}, proving that \( T_g \) is bounded on \( H^p \) for \( p \geq 1 \) under the same condition that \( g \) is an element of \( BMOA \). Additionally, they established in \cite{as2} that \( T_g \) is bounded on \( A^p \) if and only if \( g \) is within the Bloch space \( \B \). For a deeper investigation into the properties of the operator \( T_g \), refer to \cite{mppw,hyl,lly,llw,w2,wz2,cw} and the related literature.	
	
Chalmoukis, in his study \cite{ch}, delineated the conditions under which the operator \( T_{g}^{n,k} \) is bounded on Hardy spaces. Specifically, he proved that \( T_g^{n,0}: H^p \to H^p \) is bounded if and only if \( g \) is an element of \( BMOA \), and \( T_g^{n,k}: H^p \to H^p \) is bounded if and only if \( g \) belongs to the Bloch space \( \B \) for \( k \geq 1 \). Furthermore, \( T_g^{n,k}: H^p \to H^q \) is bounded if and only if the following supremum is finite:
\[
\sup_{z \in \mathbb{D}} (1 - |z|^2)^{\frac{1}{q} - \frac{1}{p} + n - k} |g^{(n-k)}(z)| < \infty
\]
for \( 0 < p < q < \infty \).
In \cite{dlq}, Du et al. characterized the boundedness, compactness, and Schatten class membership of the operator \( T_g^{n,k} \) on Bergman spaces with doubling weights. These findings highlight the distinct behavior of \( T_g^{n,k} \) in comparison to \( T_g \).
In \cite{yhl} and \cite{yl}, we explored the boundedness and compactness of the generalized integration operator \( T_g^{n,k} \) from \( AT_p^q(\alpha) \) to \( AT_t^s(\beta) \) for \( 0 < p, q, s, t < \infty \), \( \alpha, \beta > -2 \), and from \( AT_\infty^p(H^p) \) to \( AT_\infty^q(H^q) \) for \( 0 < q < p < \infty \), respectively.	

In  \cite{qz}, Qian and Zhu introduced and examined the operator \( S_g^{n, k} \), defined as:
\[
S_g^{n, k} f(z) = I^n\left(f^{(n-k)}(z) g^{(k)}(z)\right).
\]
Specifically, for the case when \( n = 1 \) and \( k = 0 \), we have:
\[
S_g^{1,0} f(z) = \int_0^z f'(\eta) g(\eta) \, d\eta = I_g f(z).
\]
It is evident that \( T_g^{n, k} = S_g^{n, n-k} \) for \( k \neq 0 \). Consequently, the interesting case is \( S_g^{n,0} \).

In this paper, we continue the work from \cite{yhl} and \cite{yl} to study the boundedness and compactness of the operator  $T_g^{n,k}:\tp\to \tq$ and $T_g^{n,k}:\p0\to \q0$ for $0<p,q<\infty$. 
Moreover, we study the boundedness and compactness of the generalized integration operators $S_g^{n,0}:\tp\to\tq$ and $S_g^{n,0}:\p0\to\q0$ for $0<p,q<\infty$.

	The paper is organized as follows. In Section 2, we mainly provide some preliminaries. In Section 3, we describe the boundedness of the operator $T_g^{n,k}$ and  $S_g^{n,0}$. In the final section, we characterize the compactness of the operator $T_g^{n,k}$ and  $S_g^{n,0}$.

	Throughout this paper, the notation \( A \lesssim B \) is used to denote that there exists a positive constant \( C \) such that \( A \leq CB \). Note that the constant \( C \) may differ in various contexts. The notation \( A \asymp B \) signifies that both \( A \lesssim B \) and \( B \lesssim A \) hold true simultaneously.

	\section{Preliminaries}
	In this section, we state some notations and lemmas that will be used in the proof of the main results in this paper.

    A positive Borel measure $\mu$ on $\mathbb{D}$ is said to be a Carleson measure (see \cite{ca1,ca2}) if 
    $$
    \|\mu\|_{CM}=\sup\limits_{I\subseteq\mathbb{T}}\frac{\mu(S(I))}{|I|}<\infty.
    $$
    Here $S(I)$ denotes the Carleson box associated with $I$, i.e.,
    $$
    S(I)=\left\{z\in\D: 1-|I|\le z<1, \frac{z}{|z|}\in I\right\}.
    $$

    A positive Borel measure $\mu$ on $\mathbb{D}$ is called a vanishing Carleson measure if
	$$
	\lim_{|I|\to0}\frac{\mu(S(I))}{|I|}=0.
	$$

	Let $\rho(z,w)=\left| \frac{z-w}{1-\ol{w}z} \right|$ and $\beta(z, w)$ be the hyperbolic metric on $\mathbb{D}$, that is, 
	$$
	\b(z,w)=\frac{1}{2}\log\frac{1+\rho(z,w)}{1-\rho(z,w)},\quad z,w\in\D.
	$$
    Let $D(z, r)=\{w \in \mathbb{D}: \beta(z, w)<r\}$ be the hyperbolic disk of radius $r>0$ centered at $z \in \mathbb{D}$. 
The sequence $Z=\{a_j\}$ is a separated sequence if there is a constant $\tau>0$ with $\beta(a_k, a_j) \geq \tau$ for $k \neq j$. 
For $r>\kappa>0$, the sequence $Z=\{a_j\}$ is said to be an $(r, \kappa)$-lattice if 
$\mathbb{D}=\bigcup_j D(a_j, r)$ and
the sets $D(a_j, \kappa)$ are pairwise disjoint.
It is obvious that any $(r, \kappa)$-lattice is a separated sequence.

\begin{Definition}
Let $Z=\{a_j\}$ be an $(r, \kappa)$-lattice. 
For $0<p<\infty$, the tent space of sequences $T^\infty_p(Z)$ consists of all $\{x_j\}$ for which
$$
\|\{x_j\}\|_{T_p^{\infty}(Z)}=\operatorname{esssup} _{\eta \in \mathbb{T}}\left(\sup _{u \in \Gamma(\eta)} \frac{1}{1-|u|^2} \sum_{a_j \in S(u)}|x_j|^p(1-|a_j|^2)\right)^{\frac{1}{p}}<\infty.
$$
Let \( T_{p,0}^\infty(Z) \) be the subset of \( T_p^{\infty}(Z) \) consisting of all sequences \( \{x_j\} \) such that
$$
\lim\limits_{|u|\to1}\frac{1}{1-|u|^2}\sum\limits_{a_j\in S(u)}|x_j|^p(1-|a_j|^2)=0.
$$
\end{Definition}
	
The aperture $\zeta$ of the non-tangential region $\Gamma_\zeta(\eta)$  is suppressed in the above definition since any two apertures generate the tent spaces of sequences  with equivalent quasinorms.
It is easy to check that the sequence elements \( \{x_j\} \in T_p^\infty(Z) \) if and only if the measure \( d\mu_t = \sum_j |x_j|^p (1 - |a_j|^2) \delta_{a_j} \) is a Carleson measure, where $\delta _{a_j}$ is the Dirac point mass at $a_j$. 
Also, $\{x_j\}\in T_{p,0}^\infty(Z)$ if and only if $d\mu_t$ is a vanishing Carleson measure.

The subsequent lemma provides a discrete description of tent spaces $\tp$, which  establishes the connection between the discrete form of tent spaces and the analytic tent spaces.

\begin{Lemma}\cite[Lemma 3.2]{cw}
	\label{3.2}
Let $0<p<\infty$, $\a>-2$ and $Z=\{a_j\}$ be an $(r,\kappa)$-lattice. Then the following conditions hold.
\begin{enumerate}
\item[(i)] A function $f\in \tp$ if and only if 
$$
\{f(a_j)(1-|a_j|^2)^\frac{\a+2}{p}\}\in T_p^\infty(Z).
$$
Moreover, 
$$
\|f\|_{\tp}\asymp\|\{ f(a_j)(1-|a_j|^2)^\frac{\a+2}{p} \}\|_{T_p^\infty(Z)}.
$$
\item[(ii)] A function $f\in \p0$ if and only if 
$$
\{f(a_j)(1-|a_j|^2)^\frac{\a+2}{p}\}\in T_{p,0}^\infty(Z).
$$
\end{enumerate}
\end{Lemma}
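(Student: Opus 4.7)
The plan is to prove (i) via a two-sided subharmonic comparison between integrals and sampled values of $|f|^p$, and then to extract (ii) by observing that the intermediate inequalities are preserved as $|u|\to 1$. For the forward direction of (i), I would apply the sub-mean value property of the subharmonic function $|f|^p$ on the hyperbolic disk $D(a_j,\kappa)$. Using $|D(a_j,\kappa)|\asymp(1-|a_j|^2)^2$ and $1-|w|^2\asymp 1-|a_j|^2$ for $w\in D(a_j,\kappa)$, this yields
\begin{equation*}
|f(a_j)|^p(1-|a_j|^2)^{\alpha+3}\lesssim \int_{D(a_j,\kappa)}|f(w)|^p(1-|w|^2)^{\alpha+1}\,dA(w).
\end{equation*}
Summing over $a_j\in S(u)$ and using the pairwise disjointness of $\{D(a_j,\kappa)\}$ (from the $(r,\kappa)$-lattice condition), together with the inclusion $\bigcup_{a_j\in S(u)}D(a_j,\kappa)\subset S(u^{\ast})$ for a slightly dilated Carleson box with $1-|u^{\ast}|^2\asymp 1-|u|^2$, I obtain
\begin{equation*}
\sum_{a_j\in S(u)}|f(a_j)|^p(1-|a_j|^2)^{\alpha+3}\lesssim \int_{S(u^{\ast})}|f|^p(1-|w|^2)^{\alpha+1}\,dA\lesssim(1-|u|^2)\,\|f\|_{\tp}^p.
\end{equation*}
Dividing by $1-|u|^2$ and taking the supremum over $u$ delivers one half of the norm equivalence.

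For the reverse direction in (i), I would use the covering $\mathbb{D}=\bigcup_j D(a_j,r)$ to dominate
\begin{equation*}
\int_{S(u)}|f|^p(1-|w|^2)^{\alpha+1}\,dA\leq \sum_{a_j\in S(u^{\ast\ast})}\int_{D(a_j,r)}|f|^p(1-|w|^2)^{\alpha+1}\,dA
\end{equation*}
for a suitably enlarged Carleson box $S(u^{\ast\ast})$ with $1-|u^{\ast\ast}|^2\asymp 1-|u|^2$. The technical heart is then to bound each local integral by the sampled quantity $|f(a_j)|^p(1-|a_j|^2)^{\alpha+3}$; since a pointwise estimate of the form $|f|^p\lesssim |f(a_j)|^p$ on $D(a_j,r)$ is not available for a general analytic $f$, I would pass through a sufficiently fine sub-lattice inside each $D(a_j,r)$, apply the forward-direction estimate to that refinement, and then absorb the resulting auxiliary sampled values through the Carleson-measure characterization of $T_p^{\infty}(Z)$ together with the hypothesis. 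This refined local bound is the main obstacle of the proof.

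Part (ii) then follows from the same chain of inequalities. Since the enlargements $u\mapsto u^{\ast},\,u^{\ast\ast}$ satisfy $|u^{\ast}|,|u^{\ast\ast}|\to 1$ as $|u|\to 1$, the forward inequality transfers the vanishing of $(1-|u|^2)^{-1}\int_{S(u)}|f|^p(1-|w|^2)^{\alpha+1}\,dA$ as $|u|\to 1$ to the vanishing of $(1-|u|^2)^{-1}\sum_{a_j\in S(u)}|f(a_j)|^p(1-|a_j|^2)^{\alpha+3}$, while the reverse inequality supplies the converse implication. This establishes the equivalence of membership in $\p0$ and in $T_{p,0}^{\infty}(Z)$.
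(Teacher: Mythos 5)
The paper offers no proof of this lemma; it is quoted from \cite[Lemma 3.2]{cw}, so your argument has to stand on its own. Your forward direction is fine and standard: subharmonicity of $|f|^p$ on the pairwise disjoint disks $D(a_j,\kappa)$, together with the inclusion of $\bigcup_{a_j\in S(u)}D(a_j,\kappa)$ in a dilated Carleson box, does yield $\|\{f(a_j)(1-|a_j|^2)^{(\alpha+2)/p}\}\|_{T_p^\infty(Z)}\lesssim\|f\|_{AT_p^\infty(\alpha)}$.

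The reverse direction, however, contains a genuine gap that your sketch does not close. You rightly observe that $|f(z)|^p\lesssim|f(a_j)|^p$ on $D(a_j,r)$ is unavailable, but the proposed repair --- refining to a sub-lattice inside each $D(a_j,r)$, applying the forward estimate to the refinement, and ``absorbing'' the auxiliary sampled values ``through the hypothesis'' --- cannot work: the hypothesis controls $f$ only at the original points $a_j$ and says nothing about $f$ at the new sample points, so there is nothing into which to absorb them. In fact the implication is false for an arbitrary $(r,\kappa)$-lattice: when $r$ is large the lattice can be the zero set of a nontrivial analytic function $g$, and then $f=gh$ with $h$ of rapid growth satisfies $f(a_j)\equiv 0$ (so the sequence condition holds trivially) while $f\notin AT_p^\infty(\alpha)$; thus both the statement and any proof of this direction require the lattice to be sufficiently fine. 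The standard argument is different from yours: on $D(a_j,r)$ one writes $|f(z)|^p\lesssim|f(a_j)|^p+|f(z)-f(a_j)|^p$ and invokes the oscillation estimate $\sup_{z\in D(a_j,r)}|f(z)-f(a_j)|\lesssim r\,\sup_{z\in D(a_j,2r)}|f(z)|$, which after summation over a Carleson box gives
\begin{equation*}
\frac{1}{1-|u|^2}\int_{S(u)}|f|^p(1-|w|^2)^{\alpha+1}\,dA\;\lesssim\;\|\{x_j\}\|_{T_p^\infty(Z)}^p+\varepsilon(r)\,\|f\|_{AT_p^\infty(\alpha)}^p
\end{equation*}
with $\varepsilon(r)\to0$ as $r\to0$; the last term is then absorbed for small $r$, after first securing $\|f\|_{AT_p^\infty(\alpha)}<\infty$ a priori (for instance by running the estimate for the dilations $f_\rho$ and letting $\rho\to1$). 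Since part (ii) uses both directions of (i), it inherits the same gap.
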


The following lemma is the atomic decomposition of tent spaces $\tp$, which plays a key role in the proof of our main theorem.

\begin{Lemma}\cite[Theorem 3.4]{cw}
	\label{3.4}
	Let $0<p<\infty$, $\a>-2$, $L>\max\{1,\frac{1}{p}\}$ and $Z=\{a_j\}$ be an $(r,\kappa)$-lattice. 
	For any sequence $\{x_j\}\in T_p^\infty(Z)$, the function 
	$$
	f(z)=\sum_{j=1}^{\infty}x_j\frac{(1-|a_j|^2)^L}{(1-\ol{a_j}z)^{L+\frac{\a+2}{p}}}
	$$
	belongs to $\tp$, and $\|f\|_{\tp}\lesssim\|\{x_j\}\|_{T_p^\infty(Z)}$.
\end{Lemma}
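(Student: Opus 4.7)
The plan is to bound, for each fixed $u\in\D$, the quantity
\begin{equation*}
A(u):=\frac{1}{1-|u|^2}\int_{S(u)}|f(z)|^p(1-|z|^2)^{\a+1}\,dA(z)
\end{equation*}
by a constant multiple of $\|\{x_j\}\|_{T_p^\infty(Z)}^p$ independently of $u$; taking the supremum over $u\in\Gamma(\eta)$ and essential supremum over $\eta\in\TT$ then gives $\|f\|_{\tp}\lesssim\|\{x_j\}\|_{T_p^\infty(Z)}$. The working hypothesis is the Carleson-type bound $\sum_{a_j\in S(v)}|x_j|^p(1-|a_j|^2)\lesssim(1-|v|^2)\|\{x_j\}\|_{T_p^\infty(Z)}^p$, valid for every $v\in\D$, which is equivalent to $\{x_j\}\in T_p^\infty(Z)$.

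The next step is a pointwise estimate of $|f(z)|^p$. For $0<p\leq 1$, subadditivity gives directly
\begin{equation*}
|f(z)|^p\leq\sum_j|x_j|^p\frac{(1-|a_j|^2)^{Lp}}{|1-\ol{a_j}z|^{Lp+\a+2}}.
\end{equation*}
For $p>1$, I would apply H\"older's inequality to $f(z)=\sum_j x_j\phi_j(z)$, where $\phi_j(z)=(1-|a_j|^2)^L(1-\ol{a_j}z)^{-(L+(\a+2)/p)}$, via the factorization $|x_j||\phi_j(z)|=(|x_j|^p|\phi_j(z)|)^{1/p}|\phi_j(z)|^{1/p'}$; the auxiliary factor $(\sum_j|\phi_j(z)|)^{p-1}$ is controlled by the standard separated-sequence lattice bound $\sum_j|\phi_j(z)|\lesssim(1-|z|^2)^{-(\a+2)/p}$, which is valid because $L>1$ and $(\a+2)/p>0$. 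In both cases the resulting estimate of $|f(z)|^p$ has the same structural form.

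After swapping sum and integral by Tonelli, the task reduces to bounding kernel integrals of the form $\int_{S(u)}(1-|z|^2)^{\gamma}|1-\ol{a_j}z|^{-\delta}\,dA(z)$ for appropriate exponents satisfying $\gamma>-1$ and $\delta>\gamma+2$. I would decompose the sum over $j$ dyadically according to the size of $|1-\ol{a_j}u|$, separating $a_j\in S(u)$ from successive annular pieces where $|1-\ol{a_j}u|\asymp 2^{k}(1-|u|)$. For the local piece, $(1-|a_j|^2)\asymp(1-|u|^2)$, the Carleson hypothesis, and the condition $Lp>1$ together yield a contribution of the correct order. For each tail piece, the standard off-diagonal estimate
\begin{equation*}
\int_{S(u)}\frac{(1-|z|^2)^{\gamma}}{|1-\ol{a_j}z|^{\delta}}\,dA(z)\lesssim\frac{(1-|u|)^{\gamma+2}}{|1-\ol{a_j}u|^{\delta}}
\end{equation*}
combined with the Carleson bound on enlarged tents produces geometric decay in $k$, so the tail series converges.

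The main obstacle is the careful bookkeeping of exponents, particularly in the $p>1$ case. The condition $L>\max\{1,1/p\}$ is used twice: $L>1$ secures convergence of the lattice sum appearing in the H\"older step, while $L>1/p$ (equivalently $Lp>1$) lets the local contribution collapse to the Carleson sum. Both uses are sharp, in the sense that a smaller choice of $L$ would either produce a divergent lattice sum or an uncontrollable local piece.
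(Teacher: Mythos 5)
The paper does not prove this lemma; it is imported verbatim from \cite[Theorem 3.4]{cw}, so there is no internal proof to compare against. Your outline is the standard synthesis half of the atomic decomposition and is correct as far as it goes: the exponents do close up (for $0<p\le 1$ the local piece collapses to $\sum_{j}|x_j|^p(1-|a_j|^2)^{Lp}(1-|a_j|^2)^{1-Lp}=\sum_j|x_j|^p(1-|a_j|^2)$ over a dilated tent, and the $k$-th annulus contributes $(1-|u|)\,2^{-k(\a+2)}\|\{x_j\}\|^p_{T_p^\infty(Z)}$; for $p>1$ the analogous computation gives decay $2^{-k(\a+2)/p}$), which is essentially the route taken in the cited source. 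One small bookkeeping remark: for $p>1$ the local piece needs only $L>1$ (not $Lp>1$, which is then automatic), so the ``two uses'' of $L>\max\{1,\tfrac1p\}$ really split by cases ($Lp>1$ doing all the work when $p\le 1$, $L>1$ when $p>1$); and you should note in passing that the same pointwise bounds give locally uniform convergence of the series, so $f$ is indeed analytic.
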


The following lemma is the atomic decomposition corresponding to little space \( \p0 \).
\begin{Lemma}\cite[Theorem 3.6]{cw}
	\label{3.6}
	Let $0<p<\infty$, $\a>-2$ and $L>\max\{1,\frac{1}{p}\}$. Then there exists an $(r,\kappa)$-lattice  $Z=\{a_j\}$ such that the space $\p0$ consists exactly of functions of the form
	$$
	f(z)=\sum_{j=1}^{\infty}x_j\frac{(1-|a_j|^2)^L}{(1-\ol{a_j}z)^{L+\frac{\a+2}{p}}}
	$$
	where $\{x_j\}\in T_{p,0}^\infty(Z)$.
\end{Lemma}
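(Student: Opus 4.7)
The plan is to establish the two inclusions separately, following the standard atomic-decomposition scheme adapted to the tent-space setting and leaning on Lemmas \ref{3.2} and \ref{3.4} already available.

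For the easy direction — that any series of the stated form with $\{x_j\}\in T_{p,0}^\infty(Z)$ lies in $\p0$ — I would first invoke Lemma \ref{3.4} to conclude $f\in \tp$ and to control the norm of tail sums by the tent-space-of-sequences norm of the corresponding coefficient tails. To upgrade membership to the little space, fix $\e>0$, and using the vanishing condition on $\{x_j\}$, choose $R<1$ so that the truncated sequence $y_j:=x_j$ when $|a_j|\geq R$ and $y_j:=0$ otherwise satisfies $\|\{y_j\}\|_{T_p^\infty(Z)}<\e$. The corresponding tail of the series then has $\tp$-norm bounded by $C\e$ by Lemma \ref{3.4}, while the remaining finite head is a finite linear combination of atoms analytic on a neighborhood of $\ol{\D}$, hence trivially in $\p0$. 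Since $\p0$ is closed in $\tp$, it follows that $f\in\p0$.

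For the converse I would use a Bergman-type reproducing formula: for $s$ sufficiently large, every $f\in H(\D)$ of appropriate growth satisfies
$$
f(z)=c_s\int_\D \frac{f(w)(1-|w|^2)^s}{(1-\ol{w}z)^{s+2}}\,dA(w).
$$
Associated to any $(r,\kappa)$-lattice $Z=\{a_j\}$ one has a Bergman-style tiling $\{E_j\}$ with $E_j\subset D(a_j,r)$ and $|E_j|\asymp (1-|a_j|^2)^2$. Discretizing the integral as a Riemann sum gives
$$
Tf(z)=c_s\sum_j f(a_j)\frac{(1-|a_j|^2)^{s+2}}{(1-\ol{a_j}z)^{s+2}}.
$$
Tuning $s$ so that $s+2=L+\frac{\a+2}{p}$ (admissible since $L>\max\{1,1/p\}$), the coefficients read $x_j=c_s f(a_j)(1-|a_j|^2)^{(\a+2)/p}$, which by Lemma \ref{3.2}(ii) belong to $T_{p,0}^\infty(Z)$ whenever $f\in\p0$.

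The principal obstacle is controlling the quadrature error $f-Tf$ so that $T$ differs from the identity by a contraction once the lattice is fine enough. On each cell $E_j$ I would apply the hyperbolic Lipschitz estimate for $(1-\ol{w}z)^{-s-2}$ together with subharmonicity-based mean value bounds for $f$, showing that the pointwise error is $O(r)$ times a local majorant of $|f|$. Feeding this majorant back through Lemma \ref{3.2} yields $\|I-T\|_{\p0\to\p0}\lesssim r$, so for $r$ small enough a Neumann series $T^{-1}=\sum_{k\geq 0}(I-T)^k$ converges boundedly on $\p0$. Applying the Riemann-sum formula to $T^{-1}f$ produces the desired atomic representation of $f$ with coefficients still in $T_{p,0}^\infty(Z)$, closing the proof.
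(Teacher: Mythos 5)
The paper does not actually prove this lemma: it is imported verbatim from \cite[Theorem 3.6]{cw}, so there is no in-paper argument to compare against. Judged on its own terms, your reconstruction follows the standard Coifman--Rochberg/Luecking scheme (reproducing formula, discretization over a sufficiently fine lattice, Neumann-series inversion of the quadrature operator), which is also how the cited source proceeds, and you correctly address both inclusions. Two points need tightening. First, in the easy direction, the claim that truncating $\{x_j\}$ to $|a_j|\ge R$ yields small $T_p^\infty(Z)$-norm does not follow directly from the definition of $T_{p,0}^\infty(Z)$: the vanishing condition controls $\frac{1}{1-|u|^2}\sum_{a_j\in S(u)}|x_j|^p(1-|a_j|^2)$ only for $|u|$ near $1$, whereas the truncated norm is a supremum over all $u$, including $u$ far from the boundary whose Carleson boxes still meet the annulus $|a_j|\ge R$; one must invoke the truncation characterization of vanishing Carleson measures (the paper's Lemma \ref{2.1}, applied to $\mu=\sum_j|x_j|^p(1-|a_j|^2)\delta_{a_j}$). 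Second, the estimate $\|I-T\|\lesssim r$ is the technical heart of the argument and is only sketched; since the $\tp$-norm is a Carleson-measure-type quantity rather than an integral norm, the pointwise local error bound must be fed back through Lemma \ref{3.2} as you indicate, and one should also verify that $T$ maps $\p0$ into itself (this follows from Lemma \ref{3.2}(ii) combined with your first direction), so that the Neumann series for $T^{-1}$ stays in $\p0$ and the resulting coefficients $x_j=c_s(T^{-1}f)(a_j)(1-|a_j|^2)^{(\a+2)/p}$ indeed lie in $T_{p,0}^\infty(Z)$. With these details supplied, the argument is sound and matches the expected proof of the cited result.
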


Given a sequence $y=\{y_j\}$, the pointwise multiplier $M_y$ is defined by 
$$
M_yx=\{y_jx_j\}
$$
for $x=\{x_j\}$.
The following lemma describes the multipliers of the tent spaces of sequences.

\begin{Lemma}\cite[Lemma 2.2]{cw}
	\label{2.2}
	Let $0<q<p<\infty$ and $Z=\{a_j\}$ be a separated sequence and $\{x_j\}$. Then the following conditions hold.
	\begin{enumerate}
	\item[(i)] $M_y$ maps $T_p^\infty(Z)$ into $T_q^\infty(Z)$ if and only if $\{y_j\}\in T_{\frac{pq}{p-q}}^\infty(Z)$. Moreover, $\|M_y\|=\|y_j\|_{T_{\frac{pq}{p-q}}^\infty}(Z)$. 
		
	\item[(ii)] $M_y$ maps $T_{p,0}^\infty(Z)$ into $T_{q,0}^\infty(Z)$ if and only if $\{y_j\}\in T_{\frac{pq}{p-q}}^\infty(Z)$. 
		
	\item[(iii)] $M_y$ maps $T_p^\infty(Z)$ into $T_{q,0}^\infty(Z)$ if and only if $\{y_j\}\in T_{\frac{pq}{p-q},0}^\infty(Z)$. 
	\end{enumerate}
\end{Lemma}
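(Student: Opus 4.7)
My plan is to handle sufficiency in all three parts via Hölder's inequality and necessity via explicit test sequences supported on Carleson boxes. Throughout, set $s=pq/(p-q)$, so that $q/p+q/s=1$. For the sufficiency of (i), at a fixed $u\in\D$ I would write $(1-|a_j|^2)=(1-|a_j|^2)^{q/p}(1-|a_j|^2)^{q/s}$ and apply Hölder with exponents $p/q$ and $s/q$ to the sum over $a_j\in S(u)$; splitting $1/(1-|u|^2)=(1-|u|^2)^{-q/p}(1-|u|^2)^{-q/s}$ then yields
\[
\frac{1}{1-|u|^2}\sum_{a_j\in S(u)}|y_jx_j|^q(1-|a_j|^2)\le A(u)^{q/p}B(u)^{q/s},
\]
where $A(u),B(u)$ are the Carleson averages for $\{x_j\}$ and $\{y_j\}$ at scale $u$. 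Taking the sup in $u$ and the esssup in $\eta$ produces $\|M_yx\|_{T_q^\infty(Z)}\le\|x\|_{T_p^\infty(Z)}\|y\|_{T_s^\infty(Z)}$.

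For necessity in (i), I would fix $u\in\D$ and choose the test $x_j=|y_j|^{q/(p-q)}\chi_{S(u)}(a_j)$. On the support of $x$ the identity $|y_jx_j|^q(1-|a_j|^2)=|y_j|^s(1-|a_j|^2)=|x_j|^p(1-|a_j|^2)$ forces
\[
\|M_yx\|^q_{T_q^\infty(Z)}=\|x\|^p_{T_p^\infty(Z)}=\sup_v\frac{1}{1-|v|^2}\sum_{a_j\in S(v)\cap S(u)}|y_j|^s(1-|a_j|^2).
\]
The operator estimate $\|M_yx\|\le\|M_y\|\,\|x\|$ then gives $\|x\|^{p-q}_{T_p^\infty(Z)}\le\|M_y\|^q$, hence $\|x\|^p_{T_p^\infty(Z)}\le\|M_y\|^s$. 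Bounding the last supremum from below by its value at $v=u$ yields $\frac{1}{1-|u|^2}\sum_{a_j\in S(u)}|y_j|^s(1-|a_j|^2)\le\|M_y\|^s$, which is precisely $\{y_j\}\in T_s^\infty(Z)$ with $\|y\|_{T_s^\infty(Z)}\le\|M_y\|$; combined with sufficiency, this gives the norm equality.

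Part (ii) follows the same scheme, except that the tests must already belong to $T_{p,0}^\infty(Z)$; I would take $x_j=|y_j|^{q/(p-q)}\chi_F(a_j)$ for finite $F\subset\{a_j\in S(u)\}$ (automatically vanishing by finite support) and then pass to the limit $F\uparrow\{a_j\in S(u)\}$ by monotone convergence to recover the same Carleson bound. The sufficiency of (iii) again follows from the Hölder splitting above: the $y$-factor vanishes as $|v|\to1$ by $\{y_j\}\in T_{s,0}^\infty(Z)$, while the $x$-factor stays bounded.

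The \emph{main obstacle} is the necessity in (iii), since the test must be a single $x\in T_p^\infty(Z)$ which is \emph{not} in $T_{p,0}^\infty(Z)$. Assuming $\{y_j\}\notin T_{s,0}^\infty(Z)$ produces $\varepsilon>0$ and points $u_n$ with $|u_n|\to1$ and $N(u_n)\ge\varepsilon$, where $N(u)=\frac{1}{1-|u|^2}\sum_{a_j\in S(u)}|y_j|^s(1-|a_j|^2)$. My plan is to pass to a subsequence so that the tops of the boxes $S(u_n)$ are hyperbolically separated (using $|u_n|\to1$ together with lattice separation), choose finite $F_n\subset\{a_j\in S(u_n)\}$ capturing mass $\ge\varepsilon/2$, and assemble the single test $x_j=|y_j|^{q/(p-q)}\chi_{\bigcup_n F_n}(a_j)$. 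The membership $\{y_j\}\in T_s^\infty(Z)$---already supplied by (i) via $T_{q,0}^\infty(Z)\subset T_q^\infty(Z)$---ensures $x\in T_p^\infty(Z)$ with uniformly bounded norm, whereas the Carleson mass of $|y_jx_j|^q(1-|a_j|^2)$ over $S(u_n)$ remains $\ge\varepsilon/2$ for all $n$, contradicting $M_yx\in T_{q,0}^\infty(Z)$. The delicate point is arranging the separation so that each $S(u_n)$ picks up the mass of $F_n$ alone, which is where the hyperbolic-separation step of the subsequence extraction is essential.
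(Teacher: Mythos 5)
The paper offers no proof of this lemma at all---it is imported verbatim from Chen--Wang \cite[Lemma 2.2]{cw}---so there is nothing in the text to compare against; your H\"older-plus-test-sequence scheme is the standard self-contained argument and is essentially correct. Two adjustments are needed. First, in the necessity of (i) the test $x_j=|y_j|^{q/(p-q)}\chi_{S(u)}(a_j)$ is not known in advance to belong to $T_p^\infty(Z)$: its norm is precisely the quantity you are trying to bound, so if the sum over $S(u)$ were infinite your chain of (in)equalities would read $\infty\le\infty$ and give nothing. You must run the argument on finite truncations $x_j=|y_j|^{q/(p-q)}\chi_F(a_j)$ with $F\subset\{a_j\in S(u)\}$ finite---exactly the device you already invoke in (ii)---and then let $F$ increase. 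Second, the ``main obstacle'' you identify in the necessity of (iii) is not actually there, and the hyperbolic-separation subsequence extraction can be deleted: for the single test $x_j=|y_j|^{q/(p-q)}\chi_{\bigcup_nF_n}(a_j)$ one has $|x_j|^p\le|y_j|^{s}$ pointwise (with equality on the support), hence
\[
\frac{1}{1-|v|^2}\sum_{a_j\in S(v)}|x_j|^p(1-|a_j|^2)\;\le\;\frac{1}{1-|v|^2}\sum_{a_j\in S(v)}|y_j|^{s}(1-|a_j|^2)\;\le\;\|\{y_j\}\|^{s}_{T_{s}^\infty(Z)}
\]
for every $v\in\D$, so $x\in T_p^\infty(Z)$ with uniformly bounded norm regardless of how the boxes $S(u_n)$ or the sets $F_n$ overlap; and the lower bound $\frac{1}{1-|u_n|^2}\sum_{a_j\in S(u_n)}|y_jx_j|^q(1-|a_j|^2)\ge\varepsilon/2$ uses only the inclusion $F_n\subset S(u_n)$, not that $S(u_n)$ meets $F_n$ alone---extra mass from other $F_m$ only helps. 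With these two repairs the proof is complete, including the exact norm identity in (i), since your H\"older step carries constant $1$.
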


The following lemma is the Littlewood-Paley formula for tent spaces $\tp$, which plays a crucial role and is used repeatedly in our proof process.

\begin{Lemma}\label{lp}
Let $0<p<\infty$,$\a>-2$ and $n\in\N$.
Then the following conditions hold.
\begin{enumerate}
\item[(i)] A function $f\in\tp$ if and only if for each (or some) $t>0$,
$$
\sup_{a\in\D}\int_{\D}\frac{(1-|a|^2)^t}{|1-\ol{a}z|^{t+1}}|f^{(n)}(z)|^p(1-|z|^2)^{np+\a+1}dA(z)<\infty.
$$

\item[(ii)] A function $f\in\p0$ if and only if $f\in\tp$ and for each (or some) $t>0$,
$$
\lim_{|a|\to 1}\int_{\D}\frac{(1-|a|^2)^t}{|1-\ol{a}z|^{t+1}}|f^{(n)}(z)|^p(1-|z|^2)^{np+\a+1}dA(z)=0.
$$
\end{enumerate}
Moreover, 
\begin{align}\label{norm}
\|f\|_{\tp}^p\asymp\sum_{j=1}^{n-1}|f^{(j)}(0)|^p+
\sup_{a\in\D}\int_{\D}\frac{(1-|a|^2)^t}{|1-\ol{a}z|^{t+1}}|f^{(n)}(z)|^p(1-|z|^2)^{np+\a+1}dA(z).
\end{align}
\end{Lemma}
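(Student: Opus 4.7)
The plan is to recast the characterization of $\tp$ as a Carleson measure condition, then transfer between $f$ and $f^{(n)}$ using local mean-value estimates in one direction and an integral reproducing formula in the other.

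First, observe that, directly from the defining norm, $f\in\tp$ (resp.\ $\p0$) if and only if the measure $d\mu_{f,\alpha}(z):=|f(z)|^p(1-|z|^2)^{\alpha+1}dA(z)$ is a (resp.\ vanishing) Carleson measure on $\D$, since $\sup_{u\in\D}\mu(S(u))/(1-|u|^2)\asymp\|\mu\|_{CM}$. By the classical Berezin-integral characterization of Carleson measures, this is in turn equivalent to
\begin{align*}
\sup_{a\in\D}\int_\D\frac{(1-|a|^2)^t}{|1-\ol{a}z|^{t+1}}\,d\mu_{f,\alpha}(z)<\infty
\end{align*}
for some (equivalently, every) $t>0$, with the analogous $|a|\to 1$ limit condition characterizing $\p0$. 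Thus the lemma reduces to showing that this kernel integral condition for $f$ is equivalent to the corresponding one for $f^{(n)}$ weighted by $(1-|z|^2)^{np+\alpha+1}$, modulo the finite boundary contribution $\sum_{j=1}^{n-1}|f^{(j)}(0)|^p$.

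For the direction from $f$ to $f^{(n)}$, I would combine Cauchy's estimate with the sub-mean-value property of $|f|^p$ on the pseudohyperbolic disk $D(z,r)$ to obtain the pointwise bound
\begin{align*}
|f^{(n)}(z)|^p(1-|z|^2)^{np+\alpha+1}\lesssim\frac{1}{(1-|z|^2)^2}\int_{D(z,r)}|f(w)|^p(1-|w|^2)^{\alpha+1}dA(w).
\end{align*}
Integrating this against the kernel $(1-|a|^2)^t/|1-\ol{a}z|^{t+1}$, swapping the order by Fubini, and exploiting $|1-\ol{a}z|\asymp|1-\ol{a}w|$ together with $(1-|z|^2)\asymp(1-|w|^2)$ on $D(z,r)$, the inner integral in $z$ is $\asymp (1-|a|^2)^t/|1-\ol{a}w|^{t+1}$, which collapses the estimate to the Berezin integral for $f$.

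For the reverse direction I would employ a Bergman-type reproducing formula
\begin{align*}
f(z)=P_{n-1}(z)+c_{n,s}\int_\D\frac{(1-|w|^2)^{n+s}}{(1-\ol{w}z)^{s+2}}f^{(n)}(w)dA(w),
\end{align*}
where $P_{n-1}$ is the degree-$(n-1)$ Taylor polynomial of $f$ at the origin and $s>0$ is chosen sufficiently large. Because $\alpha>-2$, the measure $|P_{n-1}(z)|^p(1-|z|^2)^{\alpha+1}dA(z)$ is automatically a vanishing Carleson measure, supplying exactly the $\sum_{j=1}^{n-1}|f^{(j)}(0)|^p$ piece of the norm equivalence. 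Substituting the reproducing formula into the Berezin integral for $f$ and bounding $|f(z)|^p$ by Minkowski's inequality (when $p\geq 1$), or by the subadditivity $|a+b|^p\leq|a|^p+|b|^p$ combined with sharp Forelli--Rudin type estimates (when $p<1$), yields, after Fubini and the $z$-integration, a uniform domination by the Berezin integral of $f^{(n)}$. The vanishing statement (ii) is then obtained by repeating the above while tracking the limits as $|a|\to 1$ via dominated convergence. The main obstacle is the backward direction when $p<1$, where Minkowski fails and one must pair subadditivity with precise Forelli--Rudin bounds on $\int_\D(1-|w|^2)^\sigma|1-\ol{a}w|^{-\tau}dA(w)$, selecting the auxiliary parameter $s$ in the reproducing kernel so that the exponents on both sides align with $t+1$; maintaining uniformity in $a\in\D$ (and the correct limit behavior in the vanishing setting) is the most delicate technical point.
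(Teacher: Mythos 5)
Your reduction is the same as the paper's: both arguments first observe that $f\in AT_p^\infty(\alpha)$ (resp.\ $AT_{p,0}^\infty(\alpha)$) exactly when $|f(z)|^p(1-|z|^2)^{\alpha+1}dA(z)$ is a (vanishing) Carleson measure, and then invoke the Berezin-kernel characterization of such measures (\cite[Theorem 45]{z2}). The difference is the derivative-shift step: the paper disposes of it in one line by citing \cite[Theorem 3.3]{cw}, which states precisely that $f\in AT_p^\infty(\alpha)$ iff $f^{(n)}\in AT_p^\infty(np+\alpha)$ together with the norm equivalence $\|f\|_{AT_p^\infty(\alpha)}\asymp\sum_{j=1}^{n-1}|f^{(j)}(0)|+\|f^{(n)}\|_{AT_p^\infty(np+\alpha)}$, whereas you reprove this from scratch, via a Cauchy/sub-mean-value estimate in one direction and a reproducing formula with Forelli--Rudin estimates in the other. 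Your forward direction is complete and correct. The backward direction is fine for $p\ge1$, but for $p<1$ the step as written does not parse: the subadditivity $|a+b|^p\le|a|^p+|b|^p$ cannot be applied to the integral $\int_\D(1-|w|^2)^{n+s}(1-\ol{w}z)^{-s-2}f^{(n)}(w)\,dA(w)$ directly, since there is no countable sum over which to distribute the exponent. The standard repair is to discretize first: cover $\D$ by an $(r,\kappa)$-lattice, bound the contribution of each $D(a_j,r)$ by $(1-|a_j|^2)^{n+s+2}|1-\ol{a_j}z|^{-s-2}\sup_{D(a_j,2r)}|f^{(n)}|$, apply $(\sum_j c_j)^p\le\sum_j c_j^p$ to the resulting series, and then use the sub-mean-value property to convert each $\sup^p$ back into an area integral before summing the Forelli--Rudin estimates uniformly in $a$. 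With that insertion (and the same discretization carried through the $|a|\to1$ limit for part (ii)), your argument goes through; it is more self-contained than the paper's, at the cost of redoing work that \cite{cw} already packages.
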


\begin{proof}
It is easy to check that $f\in AT_p^\infty(\a)$  if and only if the measure $d\mu_f(z)=|f(z)|^p(1-|z|^2)^{\a+1}dA(z)$ is a Carleson measure, and $f\in AT_{p,0}^\infty(\a)$ if and only if $d\mu_f$ is a vanishing Carleson measure.
In addition,  
from \cite[Theorem 3.3]{cw}, we obtain that  $f\in \tp$ if and only if $f^{(n)}\in AT_p^\infty(np+\a)$, and   $f\in \p0$ if and only if $f^{(n)}\in AT_{p,0}^\infty(np+\a)$. 
Furthermore, by \cite[Theorem 45]{z2},  $\mu$ is a (vanishing) Carleson measure if and only if, for each (or some) $t>0$, 
$$
\sup\limits_{a\in\mathbb{D}}\int\limits_{\mathbb{D}}\frac{(1-|a|^2)^t}{|1-\ol{a}z|^{t+1}}d\mu(z)<\infty\left(\quad
\lim\limits_{|a|\to1^-}\int\limits_{\mathbb{D}}\frac{(1-|a|^2)^t}{|1-\ol{a}z|^{t+1}}d\mu(z)=0\right). $$
Then, we see that {\it (i)} and {\it (ii)} hold.
Moreover, using \cite[Theorem 3.3]{cw}, we
have
 $$
\|f\|_{\tp}\asymp\sum_{j=1}^{n-1}|f^{(j)}(0)|+\|f^{(n)}\|_{AT_p^\infty(np+\a)},
$$ which implies $(\ref{norm})$.
\end{proof}

The following lemma provides a description of the growth properties of functions in tent spaces $\tp$.

\begin{Lemma}\label{z}
	Let $0<p<\infty$, $\a>-2$, $n\in\N$ and $f\in\tp$. Then 
	$$
	|f^{(n)}(z)|\lesssim\frac{\|f\|_{\tp}}{(1-|z|^2)^{\frac{\a+2}{p}+n}}
	$$
	for all $z\in\D$.
\end{Lemma}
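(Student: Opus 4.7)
The plan is to combine the submean value property for $|f^{(n)}|^p$ on a small hyperbolic disc with the Carleson-type control provided by the tent space norm. The cleanest route proceeds directly via Lemma \ref{lp}: the proof of that lemma already records that the measure $d\mu(z) = |f^{(n)}(z)|^p (1-|z|^2)^{np+\alpha+1}\, dA(z)$ is a Carleson measure with total mass at most $\|f\|_{AT_p^\infty(\alpha)}^p$ on each Carleson window, i.e.
\begin{equation*}
\int_{S(u)} |f^{(n)}(w)|^p (1-|w|^2)^{np+\alpha+1}\, dA(w) \lesssim (1-|u|^2)\,\|f\|_{AT_p^\infty(\alpha)}^p
\end{equation*}
for every $u \in \mathbb{D}$.

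Fix $z \in \mathbb{D}$ and a small hyperbolic radius $r > 0$ that I will keep constant throughout. First I would choose $u \in \mathbb{D}$ with $\arg u = \arg z$ and $1 - |u| \asymp 1 - |z|$, with constants depending only on $r$, so that $D(z, r) \subset S(u)$; this is a standard containment in the disc. Since $f^{(n)}$ is analytic, $|f^{(n)}|^p$ is subharmonic, and the submean value inequality on $D(z, r)$ (whose area is $\asymp (1-|z|^2)^2$) gives
\begin{equation*}
|f^{(n)}(z)|^p \;\lesssim\; \frac{1}{(1-|z|^2)^2} \int_{D(z, r)} |f^{(n)}(w)|^p\, dA(w).
\end{equation*}
Using $(1-|w|^2) \asymp (1-|z|^2)$ on $D(z, r)$ to insert the weight $(1-|w|^2)^{np+\alpha+1}$ at the cost of a factor $(1-|z|^2)^{-(np+\alpha+1)}$, then enlarging the region to $S(u)$ and applying the Carleson estimate above together with $1-|u|^2 \asymp 1-|z|^2$, I obtain
\begin{equation*}
|f^{(n)}(z)|^p \;\lesssim\; (1-|z|^2)^{-(np+\alpha+3)} \cdot (1-|u|^2) \cdot \|f\|_{AT_p^\infty(\alpha)}^p \;\asymp\; \frac{\|f\|_{AT_p^\infty(\alpha)}^p}{(1-|z|^2)^{np+\alpha+2}}.
\end{equation*}
Taking $p$-th roots yields the claim.

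The only mildly technical point is the geometric containment $D(z, r) \subset S(u)$ with $1-|u| \asymp 1-|z|$, which is routine from the definitions once $|z|$ is, say, at least $1/2$; the remaining compact region $\{|z| \le 1/2\}$ is handled trivially by continuity of $f^{(n)}$ and the subharmonic mean value estimate on a fixed Euclidean disc contained in $\mathbb{D}$. Beyond that the argument is entirely mechanical, and I expect no real obstacle.
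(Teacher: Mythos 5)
Your argument is correct and is essentially the paper's proof: both rest on the subharmonic mean-value inequality over a hyperbolic disc $D(z,r)$, followed by the observation that $|f^{(n)}(w)|^p(1-|w|^2)^{np+\alpha+1}\,dA(w)$ is a Carleson measure with norm $\lesssim\|f\|_{AT_p^\infty(\alpha)}^p$. The only cosmetic difference is that the paper tests this measure against the kernel $(1-|z|^2)^t/|1-\overline{z}w|^{t+1}$ from Lemma \ref{lp}, which is $\asymp(1-|z|^2)^{-1}$ on $D(z,\tfrac12)$, whereas you test it against a Carleson window $S(u)\supset D(z,r)$ with $1-|u|\asymp 1-|z|$; the two formulations are equivalent and both are available in the paper.
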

\begin{proof}
	Using \cite[Proposition 4.13]{z1} and Lemma \ref{lp}, for any $t>0$, we obtain
	\begin{align*}
		|f^{(n)}(z)|^p&\lesssim\frac{1}{(1-|z|^2)^{\a+2}}\int_{D(z,\frac{1}{2})}|f^{(n)}(w)|^p(1-|w|^2)^\a dA(w)\\
		&\asymp\frac{1}{(1-|z|^2)^{\a+2+np}}\int_{D(z,\frac{1}{2})}\frac{(1-|z|^2)^t}{|1-\ol{z}w|^{t+1}}|f^{(n)}(w)|^p(1-|w|^2)^{np+\a+1} dA(w)\\
		&\lesssim\frac{\|f\|^p_{\tp}}{(1-|z|^2)^{\a+2+np}},
	\end{align*}
	which yields the desired result.
\end{proof}

\begin{Lemma}\cite[Lemma 2.5]{of}\label{2.5}
	Let  $s>-1$, $r,t>0$ and $r+t-s-2>0$.  
		If $r<s+2<t$, then 
		$$
		\int_\D\frac{(1-|z|^2)^s}{|1-\bar{a}z|^{r}|1-\bar{b}z|^t}dA(z)\lesssim\frac{1}{|1-\ol{a}b|^r(1-|b|^2)^{t-s-2}}
		$$
		for all $z\in\D$.
\end{Lemma}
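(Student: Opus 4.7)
The natural strategy is to absorb the singularity at $b$ by applying the M\"obius substitution $z = \varphi_b(w) = \frac{b-w}{1-\ol b w}$, a self-map of $\D$ with Jacobian $dA(z) = \frac{(1-|b|^2)^2}{|1-\ol b w|^4} dA(w)$. Using the standard identities
\begin{align*}
1-|z|^2 &= \frac{(1-|b|^2)(1-|w|^2)}{|1-\ol b w|^2}, \quad 1-\ol b z = \frac{1-|b|^2}{1-\ol b w}, \\
1-\ol a z &= \frac{(1-\ol a b)(1-\ol c w)}{1-\ol b w} \quad \text{with } c := \varphi_b(a) \in \D,
\end{align*}
a direct substitution shows
$$\int_\D \frac{(1-|z|^2)^s}{|1-\ol a z|^r|1-\ol b z|^t}\, dA(z) = \frac{1}{|1-\ol a b|^r (1-|b|^2)^{t-s-2}} \cdot J(b,c),$$
where
$$J(b,c) := \int_\D \frac{(1-|w|^2)^s\, |1-\ol b w|^{r+t-2s-4}}{|1-\ol c w|^r}\, dA(w).$$
So the lemma reduces to showing that $J(b,c)$ is bounded uniformly in $b, c \in \D$.

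The main obstacle is the mixed sign of the exponent $\gamma := r+t-2s-4$, which forces a case split. When $\gamma \geq 0$, the factor $|1-\ol b w|^\gamma$ is harmlessly bounded above by $2^\gamma$, and $J(b,c)$ is controlled by the single-kernel Forelli--Rudin integral $\int_\D \frac{(1-|w|^2)^s}{|1-\ol c w|^r}\, dA(w)$, which is uniformly bounded in $c \in \D$ because $r < s+2$. When $\gamma < 0$, set $\mu := -\gamma = 2s+4-r-t$; the hypotheses $r<s+2<t$ and $r+t>s+2$ translate into $0 < \mu < s+2$ and $r+\mu = 2s+4-t < s+2$. I would then apply H\"older's inequality with conjugate exponents $p, q > 1$ chosen so that $rp < s+2$ and $\mu q < s+2$:
$$J(b,c) \leq \left(\int_\D \frac{(1-|w|^2)^s}{|1-\ol c w|^{rp}}\, dA(w)\right)^{1/p} \left(\int_\D \frac{(1-|w|^2)^s}{|1-\ol b w|^{\mu q}}\, dA(w)\right)^{1/q}.$$
Such conjugate exponents exist precisely because $\frac{r}{s+2} + \frac{\mu}{s+2} < 1$, and each factor on the right is then a uniformly bounded Forelli--Rudin integral.

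Combining the two cases yields a uniform bound on $J(b,c)$ and hence the lemma. The substitution computation is routine bookkeeping; the genuine subtlety lies in the $\gamma < 0$ case, where a naive pointwise bound like $|1-\ol b w|^\gamma \leq (1-|w|)^\gamma$ would produce a divergent Forelli--Rudin integral whenever $r+t \leq s+3$, making the H\"older refinement indispensable.
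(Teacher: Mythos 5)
Your argument is correct. Note first that the paper does not prove this lemma at all: it is imported verbatim as a citation of Ortega--F\`abrega, so there is no in-paper proof to compare against. Your Möbius substitution is carried out correctly (the identity $1-\ol a\varphi_b(w)=\frac{(1-\ol ab)(1-\ol cw)}{1-\ol bw}$ with $c=\varphi_b(a)$ and the Jacobian $\frac{(1-|b|^2)^2}{|1-\ol bw|^4}$ both check out), the exponent bookkeeping produces exactly the claimed prefactor $|1-\ol ab|^{-r}(1-|b|^2)^{-(t-s-2)}$, and the two cases for $\gamma=r+t-2s-4$ are handled properly: for $\gamma\ge 0$ the single-kernel Forelli--Rudin estimate applies since $r<s+2$, and for $\gamma<0$ your H\"older split is admissible because $r+\mu=2(s+2)-t<s+2$ is exactly the hypothesis $t>s+2$, which guarantees conjugate exponents with $rp<s+2$ and $\mu q<s+2$. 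The more common proof of this two-kernel estimate in the literature avoids the change of variables altogether: one splits $\D$ into the two regions where $|1-\ol az|\gtrsim|1-\ol ab|$ and where $|1-\ol bz|\gtrsim|1-\ol ab|$ (using $|1-\ol ab|\lesssim|1-\ol az|+|1-\ol bz|$), pulls the corresponding factor out of the integral on each piece, and finishes with the one-kernel Forelli--Rudin lemma; that route needs no case distinction on $\gamma$ and no H\"older, at the cost of the slightly less transparent geometric decomposition. Both arguments are valid; yours has the advantage of reducing everything to a single uniform bound $J(b,c)\lesssim 1$, and your closing observation that the naive bound $|1-\ol bw|^{\gamma}\le(1-|w|)^{\gamma}$ fails when $r+t\le s+3$ correctly identifies why the H\"older step is not removable in your setup.
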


\section{Boundedness}
In this section, we describe the boundedness of the generalized integration operators $T_g^{n,k}:\tp\to\tq$ and  $T_g^{n,k}:\p0\to\q0$ for $0<p,q<\infty$.
Furthermore, we study the boundedness of the operators $S_g^{n,0}:\tp\to\tq$ and $S_g^{n,0}:\p0\to\q0$ for $0<p, q<\infty$.
Now we state and prove the main results in this section.

\begin{Theorem}\label{th1}
Let $0<p\le q<\infty$, $\a,\b>-2$, $g\in H(\D)$, $n\in\N$ and $k\in\N\cup\{0\}$ such that $0 \leq k<n$.
Then $T_g^{n,k}:AT_p^\infty(\a)\to AT_q^\infty(\b)$ is bounded if and only if 
$$
U_g=\sup_{z\in\D}|g^{(n-k)}(z)|(1-|z|^2)^{n-k+\frac{\b+2}{q}-\frac{\a+2}{p}}<\infty.
$$
\end{Theorem}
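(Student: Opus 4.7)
The starting point for both directions is the Littlewood--Paley characterization in Lemma \ref{lp}, together with the identity $(T_g^{n,k}f)^{(n)}(z)=f^{(k)}(z)g^{(n-k)}(z)$ and the fact that $(T_g^{n,k}f)^{(j)}(0)=0$ for $0\le j\le n-1$ (since $T_g^{n,k}$ begins with $n$ iterated integrations from the origin). Fixing any $t>0$, this reduces the norm to
\begin{equation*}
\|T_g^{n,k}f\|_{\tq}^q \asymp \sup_{a\in\D}\int_\D \frac{(1-|a|^2)^t}{|1-\bar{a}z|^{t+1}}|f^{(k)}(z)|^q|g^{(n-k)}(z)|^q(1-|z|^2)^{nq+\beta+1}\,dA(z),
\end{equation*}
so the entire proof revolves around controlling this integral.

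\textbf{Sufficiency.} Assuming $U_g<\infty$, I substitute $|g^{(n-k)}(z)|^q\le U_g^q(1-|z|^2)^{-q(n-k)-(\beta+2)+q(\alpha+2)/p}$; the remaining weight simplifies to $(1-|z|^2)^{kq-1+q(\alpha+2)/p}$. When $p=q$ this weight equals $(1-|z|^2)^{kp+\alpha+1}$, and Lemma \ref{lp} applied to $f$ at derivative order $k$ (or, for $k=0$, the Carleson measure characterization of $\tp$) immediately gives the bound $\lesssim U_g^q\|f\|_{\tp}^p$. When $p<q$, I split $|f^{(k)}|^q=|f^{(k)}|^{q-p}|f^{(k)}|^p$ and insert the pointwise bound from Lemma \ref{z},
\begin{equation*}
|f^{(k)}(z)|^{q-p}\lesssim \|f\|_{\tp}^{q-p}(1-|z|^2)^{-(q-p)((\alpha+2)/p+k)},
\end{equation*}
which exactly absorbs the excess in the exponent and reduces the integral to the same $p$-version controlled by $\|f\|_{\tp}^p$.

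\textbf{Necessity.} For $|a|\ge 1/2$ I test against $f_a(z)=(1-|a|^2)^L(1-\bar{a}z)^{-L-(\alpha+2)/p}$ with $L$ sufficiently large. A direct computation through Lemma \ref{lp} at derivative order $n$ combined with Lemma \ref{2.5} yields $\sup_{a\in\D}\|f_a\|_{\tp}\lesssim 1$, while differentiating gives $|f_a^{(k)}(a)|\asymp|\bar{a}|^k(1-|a|^2)^{-(\alpha+2)/p-k}\gtrsim (1-|a|^2)^{-(\alpha+2)/p-k}$. Evaluating Lemma \ref{z} at $z=a$ for $T_g^{n,k}f_a\in\tq$ at derivative order $n$ produces
\begin{equation*}
|f_a^{(k)}(a)|\,|g^{(n-k)}(a)|=|(T_g^{n,k}f_a)^{(n)}(a)|\lesssim \frac{\|T_g^{n,k}\|}{(1-|a|^2)^{(\beta+2)/q+n}},
\end{equation*}
and solving for $|g^{(n-k)}(a)|$ yields the desired estimate on the quantity defining $U_g$. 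The range $|a|<1/2$ contributes only a fixed finite constant since $g^{(n-k)}$ is analytic on a compact set.

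\textbf{Main obstacle.} The most delicate step is the uniform norm bound $\|f_a\|_{\tp}\lesssim 1$: one must choose $L$ large and an auxiliary $t>0$ small so that the hypotheses of Lemma \ref{2.5} hold (roughly $t+1<np+\alpha+3<Lp+np+\alpha+2$), and then check that the resulting bound $(1-|b|^2)^t(1-|a|^2)/|1-\bar{b}a|^{t+1}$ is at most a constant, which follows from $1-|a|^2,\,1-|b|^2\lesssim|1-\bar{b}a|$. On the sufficiency side the only extra care is that $p\le q$ is essential so that the exponent $q-p\ge 0$ when invoking Lemma \ref{z}.
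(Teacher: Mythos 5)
Your proof is correct and follows essentially the same route as the paper: the sufficiency argument (Littlewood--Paley formula, extracting $U_g$, then absorbing $|f^{(k)}|^{q-p}$ via Lemma \ref{z}) is identical, and the necessity uses the same family of test functions $(1-\bar a z)^{-(\alpha+2)/p}$ up to the harmless normalizing factor $(1-|a|^2)^L$. The only cosmetic difference is in how you extract the pointwise bound on $g^{(n-k)}(a)$: you apply the growth estimate of Lemma \ref{z} to $T_g^{n,k}f_a\in AT_q^\infty(\beta)$, whereas the paper restricts the Carleson integral to the hyperbolic disk $D(a,\tfrac12)$ and invokes subharmonicity there --- the same mechanism that underlies Lemma \ref{z}.
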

\begin{proof}

{\bf Sufficiency.}	
Assume that $U_g<\infty$.
For $f\in\tp$ and $t>0$, using Lemmas \ref{lp} and \ref{z}, we obtain
\begin{equation*}\label{4.1}
\begin{aligned}
\|T_g^{n,k}f\|_{\tq}^q
\asymp&\sup_{a\in\D}\int_{\D}\frac{(1-|a|^2)^t}{|1-\ol{a}z|^{t+1}}|(T_g^{n,k}f)^{(n)}(z)|^q(1-|z|^2)^{nq+\b+1}dA(z)\\
	=&\sup_{a\in\D}\int_{\D}\frac{(1-|a|^2)^t}{|1-\ol{a}z|^{t+1}}|f^{(k)}(z)|^q|g^{(n-k)}(z)|^q(1-|z|^2)^{nq+\b+1}dA(z)\\
	\lesssim& \sup_{z\in\D}|g^{(n-k)}(z)|^q(1-|z|^2)^{\left(n-k+\frac{\b+2}{q}-\frac{\a+2}{p}\right)q}\\
	&\cdot\sup_{a\in\D}\int_{\D}\frac{(1-|a|^2)^t}{|1-\ol{a}z|^{t+1}}|f^{(k)}(z)|^q(1-|z|^2)^{kq-1+\frac{q(\a+2)}{p}}dA(z)\\
	\lesssim&U_g^q\|f\|^{q-p}_{\tp}\sup_{a\in\D}\int_{\D}\frac{(1-|a|^2)^t}{|1-\ol{a}z|^{t+1}}|f^{(k)}(z)|^p(1-|z|^2)^{kp+\a+1}dA(z)\\
	\lesssim&U_g^q \|f\|^{q}_{\tp},
\end{aligned}
\end{equation*}
which implies the desired result.
	
{\bf Necessity.}
Suppose that $T_g^{n,k}:AT_p^\infty(\a)\to AT_q^\infty(\b)$ is bounded. For $u\in\D$, set
$$
f_u(z)=\frac{1}{(1-\ol{u}z)^\frac{\a+2}{p}}, \quad z\in\D.
$$
For $t>\a+2$, using Lemma \ref{2.5} we get
\begin{align*}
\|f_u\|^p_{\tp}\asymp&\sup_{a\in\D}\int_{\D}\frac{(1-|a|^2)^t}{|1-\ol{a}z|^{t+1}}\frac{(1-|z|^2)^{\a+1}}{|1-\ol{u}z|^{\a+2}}dA(w)\\
\lesssim&\sup_{a\in\D}\frac{(1-|a|^2)^{\a+2}}{|1-\ol{a}u|^{\a+2}}\lesssim1.
\end{align*}
Hence, for $t>0$, using Lemma \ref{lp} and \cite[Proposition 4.13]{z1}, we have
\begin{align*}
\infty>&\|T_g^{n,k}f_u\|_{\tq}^q\asymp\sup_{a\in\D}\int_{\D}\frac{(1-|a|^2)^t}{|1-\ol{a}z|^{t+1}}|(T_g^{n,k}f_u)^{(n)}(z)|^q(1-|z|^2)^{nq+\b+1}dA(z)\\
\ge&\int_{\D}\frac{(1-|u|^2)^t}{|1-\ol{u}z|^{t+1}}|f_u^{(k)}(z)|^q|g^{(n-k)}(z)|^q(1-|z|^2)^{nq+\b+1}dA(z)\\
\gtrsim&\int_{D(u,\frac{1}{2})}\frac{(1-|u|^2)^t(1-|z|^2)^{nq+\b+1}}{|1-\ol{u}z|^{t+1+\frac{q(\a+2)}{p}+kq}}|g^{(n-k)}(z)|^qdA(z)\\
\asymp&\int_{D(u,\frac{1}{2})}(1-|z|^2)^{\b+(n-k)q-\frac{q(\a+2)}{p}}|g^{(n-k)}(z)|^qdA(z)\\
\gtrsim&(1-|u|^2)^{\b+2+(n-k)q-\frac{q(\a+2)}{p}}|g^{(n-k)}(u)|^q,
\end{align*} 
which implies the desired result.
The proof is complete.
\end{proof}

Noting that $f_u\in\p0$ for $u\in\D$, similarly to the proof of Theorem \ref{th1}, we get the following result. We omit the details of the proof.

\begin{Theorem}\label{th10}
Let $0<p\le q<\infty$, $\a,\b>-2$, $g\in H(\D)$, $n\in\N$ and $k\in\N\cup\{0\}$ such that $0 \leq k<n$.
Then $T_g^{n,k}:AT_{p,0}^\infty(\a)\to AT_{q,0}^\infty(\b)$ is bounded if and only if 
$$
U_g=\sup_{z\in\D}|g^{(n-k)}(z)|(1-|z|^2)^{n-k+\frac{\b+2}{q}-\frac{\a+2}{p}}<\infty.
$$
\end{Theorem}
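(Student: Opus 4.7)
The plan is to recycle both directions of the proof of Theorem \ref{th1}, verifying at each step that the ``little-oh'' condition is preserved.

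For the necessity direction, I would reuse the same test functions $f_u(z) = (1-\bar u z)^{-(\alpha+2)/p}$. For each fixed $u \in \D$, the function $f_u$ is bounded on $\D$, so the measure $|f_u(z)|^p(1-|z|^2)^{\alpha+1} dA(z)$ is dominated by a constant multiple of $(1-|z|^2)^{\alpha+1} dA(z)$; since $\alpha>-2$, the Carleson quotient of the latter on a box $S(I)$ is comparable to $|I|^{\alpha+2}$, which tends to $0$ as $|I|\to 0$. Hence $f_u \in \p0$ with $\|f_u\|_{\tp} \lesssim 1$, and the localized lower bound around $u$ used in Theorem \ref{th1}, applied now to $T_g^{n,k}f_u \in \q0 \subseteq \tq$, forces $U_g < \infty$.

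For the sufficiency direction, assume $U_g < \infty$ and fix $f \in \p0$. The same chain of inequalities from the sufficiency of Theorem \ref{th1} produces, for any $t > 0$,
$$\int_\D \frac{(1-|a|^2)^t}{|1-\bar a z|^{t+1}} \bigl|(T_g^{n,k}f)^{(n)}(z)\bigr|^q (1-|z|^2)^{nq+\beta+1} dA(z) \lesssim U_g^q \|f\|_{\tp}^{q-p} J(a),$$
where $J(a) = \int_\D \frac{(1-|a|^2)^t}{|1-\bar a z|^{t+1}} |f^{(k)}(z)|^p (1-|z|^2)^{kp+\alpha+1} dA(z)$. To deduce $T_g^{n,k}f \in \q0$ via Lemma \ref{lp}(ii), it remains to show $J(a) \to 0$ as $|a|\to 1$. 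When $k \geq 1$, this is Lemma \ref{lp}(ii) applied to $f$ with derivative order $k$; when $k = 0$, it follows from the equivalence between $f \in \p0$ and $d\mu_f = |f(z)|^p(1-|z|^2)^{\alpha+1} dA(z)$ being a vanishing Carleson measure, an observation explicitly recorded in the proof of Lemma \ref{lp}.

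The only subtle point is the case $k = 0$ in sufficiency, which lies just outside the formal statement of Lemma \ref{lp}(ii) (phrased for $n \in \N$) and must be handled directly via the vanishing-Carleson characterization of $\p0$. Everything else is a verbatim transcription of the arguments in Theorem \ref{th1}, which is why the authors feel comfortable omitting the details.
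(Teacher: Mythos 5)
Your proposal is correct and follows exactly the route the paper intends: the authors omit the proof of Theorem \ref{th10}, saying only that one notes the test functions $f_u$ lie in $AT_{p,0}^\infty(\alpha)$ and then argues as in Theorem \ref{th1}, which is precisely what you carry out in both directions. Your explicit handling of the $k=0$ case in the sufficiency step via the vanishing-Carleson characterization of $AT_{p,0}^\infty(\alpha)$ is a detail the paper leaves implicit, but it is consistent with the equivalence recorded in the proof of Lemma \ref{lp}.
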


\begin{Theorem}\label{th2}
Let $0<q<p<\infty$, $\a,\b>-2$ with $\frac{p\b-q\a}{p-q}>-2$, $g\in H(\D)$, $n\in\N$ and $k\in\N\cup\{0\}$ such that $0 \leq k<n$.
Then $T_g^{n,k}:AT_{p}^\infty(\a)\to AT_{q}^\infty(\b)$ is bounded if and only if
$$
g^{(n-k)}\in AT^\infty_{\frac{pq}{p-q}}\left(\lambda\right),
$$
where $\lambda=\frac{p\b-q\a}{p-q}$.
\end{Theorem}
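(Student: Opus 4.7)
The plan is to run the same Littlewood-Paley reduction as in Theorem \ref{th1} and then exploit the discrete multiplier characterization in Lemma \ref{2.2} (which is only effective in the regime $q<p$). Concretely, since $(T_g^{n,k}f)^{(j)}(0)=0$ for $0\le j<n$, Lemma \ref{lp} gives
$$\|T_g^{n,k}f\|_{\tq}^q\asymp\sup_{a\in\D}\int_\D\frac{(1-|a|^2)^t}{|1-\ol{a}z|^{t+1}}|f^{(k)}(z)|^q|g^{(n-k)}(z)|^q(1-|z|^2)^{nq+\b+1}dA(z),$$
and, with $n$ replaced by $k$, an analogous Carleson-type description of $\|f\|_{\tp}^p$ in terms of $|f^{(k)}|^p(1-|z|^2)^{kp+\a+1}$.

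For sufficiency, I would apply H\"older's inequality to the above display with conjugate exponents $p/q$ and $p/(p-q)$, placing $|f^{(k)}|$ and $|g^{(n-k)}|$ into the two factors and distributing the kernel $(1-|a|^2)^t/|1-\ol{a}z|^{t+1}$ accordingly. Matching the exponent of $(1-|z|^2)$ in the H\"older split forces the first factor to reconstruct the Lemma \ref{lp} integral for $\|f\|_{\tp}^p$ (hence $\lesssim\|f\|_{\tp}^p$), while the second factor becomes a Carleson integral for $|g^{(n-k)}|^{pq/(p-q)}$ weighted by a power of $(1-|z|^2)$ matching $\lambda+1$, which is finite by the hypothesis $g^{(n-k)}\in AT^\infty_{pq/(p-q)}(\lambda)$.

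For necessity, the one-parameter test function $f_u(z)=(1-\ol{u}z)^{-(\a+2)/p}$ from Theorem \ref{th1} only yields a pointwise bound on $|g^{(n-k)}|$, which is insufficient when $q<p$. Instead I would fix an $(r,\kappa)$-lattice $Z=\{a_j\}$ and, for arbitrary $\{x_j\}\in T_p^\infty(Z)$, test boundedness against the randomized atomic sum
$$f_\epsilon(z)=\sum_j\epsilon_jx_j\frac{(1-|a_j|^2)^L}{(1-\ol{a_j}z)^{L+(\a+2)/p}}\in\tp,$$
whose $\tp$-norm is controlled uniformly in the Rademacher signs $\epsilon_j$ by Lemma \ref{3.4}. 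Averaging $\|T_g^{n,k}f_\epsilon\|_{\tq}\lesssim\|f_\epsilon\|_{\tp}$ over $\epsilon$ and applying Khinchine's inequality inside the tent-space integral should produce
$$\|\{x_jy_j\}\|_{T_q^\infty(Z)}\lesssim\|\{x_j\}\|_{T_p^\infty(Z)},\qquad y_j=g^{(n-k)}(a_j)(1-|a_j|^2)^{(n-k)+\frac{\b+2}{q}-\frac{\a+2}{p}}.$$
Lemma \ref{2.2}(i) then identifies this multiplier estimate with $\{y_j\}\in T^\infty_{pq/(p-q)}(Z)$, and a final appeal to Lemma \ref{3.2} (applied now to $g^{(n-k)}$) converts the sequence condition back into $g^{(n-k)}\in AT^\infty_{pq/(p-q)}(\lambda)$. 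The main technical obstacle I expect is the randomization step: controlling the cross terms generated when $|f_\epsilon^{(k)}|^q|g^{(n-k)}|^q$ is expanded inside the tent integral, and reducing the resulting sums to the sequence-multiplier bound, will require reproducing-kernel estimates of the type in Lemma \ref{2.5}.
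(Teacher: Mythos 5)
Your plan coincides with the paper's own proof: sufficiency is exactly the H\"older split (with exponents $p/q$ and $p/(p-q)$) of the Littlewood--Paley/Carleson integral from Lemma \ref{lp} against the $AT^\infty_{\frac{pq}{p-q}}(\lambda)$ norm of $g^{(n-k)}$, and necessity is exactly the Rademacher-randomized atomic test functions of Lemma \ref{3.4} combined with Khinchine's inequality, Lemma \ref{2.2}(i) and Lemma \ref{3.2}. The only place your expectations differ from the actual execution is the last technical step: Khinchine eliminates the cross terms outright by replacing the average of $|F_u^{(k)}|^q$ with a square function, and the paper then passes to the sequence-multiplier bound using the bounded overlap of the hyperbolic disks $D(a_j,\rho)$ together with the sub-mean-value estimate of \cite[Proposition 4.13]{z1}, rather than any reproducing-kernel estimate of the type in Lemma \ref{2.5}.
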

\begin{proof}
{\bf Sufficiency.}
For $f\in AT_p^\infty(\a)$
 and $t>0$, using Lemma \ref{lp} and H\"older's inequality, we obtain
\begin{equation}\label{4.2}
\begin{aligned}
\|T_g^{n,k}f\|_{\tq}^q\asymp
&\sup_{a\in\D}\int_{\D}\frac{(1-|a|^2)^t}{|1-\ol{a}z|^{t+1}}|(T_g^{n,k}f)^{(n)}(z)|^q(1-|z|^2)^{nq+\b+1}dA(z)\\
=&\sup_{a\in\D}\int_{\D}\frac{(1-|a|^2)^t}{|1-\ol{a}z|^{t+1}}|f^{(k)}(z)|^q|g^{(n-k)}(z)|^q(1-|z|^2)^{nq+\b+1}dA(z)\\
\le&\sup_{a\in\D}\left( \int_{\D}\frac{(1-|a|^2)^t}{|1-\ol{a}z|^{t+1}}|f^{(k)}(z)|^p(1-|z|^2)^{kp+\a+1}dA(z)  \right)^{\frac{q}{p}}\\
&\cdot\sup_{a\in\D}\left( \int_{\D}\frac{(1-|a|^2)^t}{|1-\ol{a}z|^{t+1}}|g^{(n-k)}(z)|^{\frac{pq}{p-q}}(1-|z|^2)^{(n-k)\frac{pq}{p-q}+\frac{p\b-q\a}{p-q}+1}   \right)^{\frac{p-q}{p}}\\
\lesssim&\|f\|^q_{\tp}\|g^{(n-k)}\|^q_{AT^\infty_{\frac{pq}{p-q}}\left(\lambda\right)}.
\end{aligned}
\end{equation}
Therefore, $T_g^{n,k}:AT_{p}^\infty(\a)\to AT_{q}^\infty(\b)$ is bounded.

{\bf Necessity.}
Let $Z=\{a_j\}$ be an $(r,\kappa)$-lattice and $\{x_j\}\in T_p^\infty(Z)$. Let $r_j:[0,1]\to\{-1,1\}$ be the Radermacher functions. For $L>\max\{1,\frac{1}{p}\}$, set
\begin{align}\label{4.3}
F_u(z)=\sum_{j=1}^{\infty}x_jr_j(u)\frac{(1-|a_j|^2)^L}{|1-\ol{a_j}z|^{L+\frac{\a+2}{p}}},\quad z\in\D.
\end{align}
Using Lemma \ref{3.4}, we have $\|F_u\|_{\tp}\lesssim\|\{x_j\}\|_{T_p^\infty(Z)}$.
For $t>0$, by the assumption and Lemma \ref{lp}, we get
\begin{align*}
	\|T_g^{n,k}F_u\|_{\tq}^q\asymp
&\sup_{a\in\D}\int_{\D}\frac{(1-|a|^2)^t}{|1-\ol{a}z|^{t+1}}|(T_g^{n,k}F_u)^{(n)}(z)|^q(1-|z|^2)^{nq+\b+1}dA(z)\\
=&\sup_{a\in\D}\int_{\D}\frac{(1-|a|^2)^t}{|1-\ol{a}z|^{t+1}}|F_u^{(k)}(z)|^q|g^{(n-k)}(z)|^q(1-|z|^2)^{nq+\b+1}dA(z)\\
\lesssim&\|T_g^{n,k}\|^q\|F_u\|^q_{\tp}\lesssim\|T_g^{n,k}\|^q\|\{x_j\}\|^q_{T_p^\infty(Z)}.
\end{align*}
Integrating with respect to $u$ on $[0,1]$ and using Fubini's theorem and Khinchine's inequality (see \cite{l3}), we obtain
\begin{align*}
&\sup_{a\in\D}\int_{\D}\frac{(1-|a|^2)^t}{|1-\ol{a}z|^{t+1}}\left(  \sum_{j=1}^{\infty}|x_j|^2\frac{(1-|a_j|^2)^{2L}}{|1-\ol{a_j}z|^{2L+2k+\frac{2(\a+2)}{p}}}  \right)^{\frac{q}{2}}|g^{(n-k)}(z)|^q(1-|z|^2)^{nq+\b+1}dA(z)\\
\lesssim&\|T_g^{n,k}\|^q\|\{x_j\}\|^q_{T_p^\infty(Z)}.
\end{align*}
Since each point $z\in\D$ belongs to at most $\N$ of the sets $D(a_j,\rho)$, we have
\begin{align*}
\left(  \sum_{j=1}^{\infty}|x_j|^2\frac{(1-|a_j|^2)^{2L}}{|1-\ol{a_j}z|^{2L+2k+\frac{2(\a+2)}{p}}}  \right)^{\frac{q}{2}}\ge&\left(  \sum_{j=1}^{\infty}|x_j|^2\frac{(1-|a_j|^2)^{2L}\chi_{D(a_j,\rho)}(z)}{|1-\ol{a_j}z|^{2L+2k+\frac{2(\a+2)}{p}}}  \right)^{\frac{q}{2}}\\
\ge&\sum_{j=1}^{\infty}|x_j|^q\frac{\chi_{D(a_j,\rho)}(z)}{(1-|a_j|^2)^{kq+\frac{q(\a+2)}{p}}}.
\end{align*}
Hence,
\begin{align*}
&\sup_{a\in\D}\sum_{j=1}^{\infty}\frac{(1-|a|^2)^t}{|1-\ol{a_j}a|^{t+1}}|x_j|^q(1-|a_j|^2)^{nq-kq+\b+1-\frac{q(\a+2)}{p}}\int_{D(a_j,\rho)}|g^{(n-k)}(z)|^qdA(z)\\
\lesssim&\|T_g^{n,k}\|^q\|\{x_j\}\|^q_{T_p^\infty(Z)}.
\end{align*}
By \cite[Proposition 4.13]{z1}, we get
\begin{align*}
&\sup_{a\in\D}\sum_{j=1}^{\infty}\frac{(1-|a|^2)^t}{|1-\ol{a_j}a|^{t+1}}|x_j|^q(1-|a_j|^2)^{2+nq-kq+\b+1-\frac{q(\a+2)}{p}}|g^{(n-k)}(a_j)|^q\\
\lesssim&\|T_g^{n,k}\|^q\|\{x_j\}\|^q_{T_p^\infty(Z)},
\end{align*}
which implies that
\begin{align*}
\{x_jg^{(n-k)}(a_j)(1-|a_j|^2)^{n-k+\frac{\b+2}{q}-\frac{\a+2}{p}}\}\in T_q^\infty(Z)
\end{align*}
and 
\begin{align*}
\|\{  x_jg^{(n-k)}(a_j)(1-|a_j|^2)^{n-k+\frac{\b+2}{q}-\frac{\a+2}{p}} \}\|_{T_q^\infty(Z)}\lesssim\|T_g^{n,k}\|\|\{x_j\}\|_{T_q^\infty(Z)}.
\end{align*}
Since $\{x_j\}\in T_p^\infty(Z)$ is arbitrary, using Lemma \ref{2.2} $(i)$, we have
\begin{align*}
\{g^{(n-k)}(a_j)(1-|a_j|^2)^{n-k+\frac{\b+2}{q}-\frac{\a+2}{p}}\}\in T_{\frac{pq}{p-q}}^\infty(Z)
\end{align*}
and
\begin{align*}
\| \{g^{(n-k)}(a_j)(1-|a_j|^2)^{n-k+\frac{\b+2}{q}-\frac{\a+2}{p}}\} \|_{T_{\frac{pq}{p-q}}^\infty(Z)}\lesssim\|T_g^{n,k}\|.
\end{align*}
Combining this result with Lemmas \ref{3.2} and \ref{lp}, we can deduce that $g^{(n-k)}\in AT^\infty_{\frac{pq}{p-q}}\left(\lambda\right)$.
The proof is complete.
\end{proof}

\begin{Theorem}\label{th20}
	Let $0<q<p<\infty$, $\a,\b>-2$ with $\frac{p\b-q\a}{p-q}>-2$, $g\in H(\D)$, $n\in\N$ and $k\in\N\cup\{0\}$ such that $0 \leq k<n$.
	Then $T_g^{n,k}:AT_{p,0}^\infty(\a)\to AT_{q,0}^\infty(\b)$ is bounded if and only if
	$$
	g^{(n-k)}\in AT^\infty_{\frac{pq}{p-q}}\left(\lambda\right),
	$$
	where $\lambda=\frac{p\b-q\a}{p-q}$.
\end{Theorem}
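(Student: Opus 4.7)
The plan is to mirror the structure of the proof of Theorem \ref{th2}, refining each step to track the vanishing behavior characteristic of the little spaces, and to replace appeals to Lemma \ref{2.2}(i) with Lemma \ref{2.2}(ii).

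For sufficiency, I will decompose the integral appearing in the $\tq$-norm characterization (Lemma \ref{lp}) via the same H\"older splitting as in (\ref{4.2}). This decomposition isolates an $f$-factor, namely the integrand of the $\tp$-norm for $f^{(k)}$, which tends to $0$ as $|a|\to1$ by Lemma \ref{lp}(ii) applied to $f\in AT_{p,0}^\infty(\a)$, and a $g$-factor that is uniformly bounded in $a$ by $\|g^{(n-k)}\|_{AT_{pq/(p-q)}^\infty(\lambda)}$. The product therefore vanishes, and Lemma \ref{lp}(ii) yields $T_g^{n,k}f\in AT_{q,0}^\infty(\b)$.

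For necessity, I will invoke Lemma \ref{3.6} to obtain an $(r,\kappa)$-lattice $Z=\{a_j\}$, fix an arbitrary $\{x_j\}\in T_{p,0}^\infty(Z)$, and form the Rademacher-randomized function $F_u$ exactly as in (\ref{4.3}). Since $|x_jr_j(u)|=|x_j|$, one has $F_u\in AT_{p,0}^\infty(\a)$ with norm bounded uniformly in $u$, so the boundedness hypothesis gives $T_g^{n,k}F_u\in AT_{q,0}^\infty(\b)$ and $\|T_g^{n,k}F_u\|_{\tq}^q\lesssim\|T_g^{n,k}\|^q\|\{x_j\}\|_{T_p^\infty(Z)}^q$ uniformly in $u$. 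Running the Fubini/Khinchine chain of the proof of Theorem \ref{th2} verbatim produces, for $y_j=g^{(n-k)}(a_j)(1-|a_j|^2)^{n-k+\frac{\b+2}{q}-\frac{\a+2}{p}}$, the sup estimate $\|\{x_jy_j\}\|_{T_q^\infty(Z)}\lesssim\|T_g^{n,k}\|\|\{x_j\}\|_{T_p^\infty(Z)}$. The essential new ingredient is to upgrade this to $\{x_jy_j\}\in T_{q,0}^\infty(Z)$, so that Lemma \ref{2.2}(ii) becomes applicable. For each fixed $u\in[0,1]$, the vanishing Carleson condition supplied by $T_g^{n,k}F_u\in AT_{q,0}^\infty(\b)$ and Lemma \ref{lp}(ii) makes the inner integral over $\D$ tend to $0$ as $|a|\to1$; coupled with the $u$-uniform operator-norm bound as a dominating constant, the dominated convergence theorem on $[0,1]$ transports the vanishing through the $u$-integration. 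Fubini, Khinchine's inequality, and the sub-mean-value argument used in Theorem \ref{th2} then discretize this into $\sum_j\frac{(1-|a|^2)^t}{|1-\ol{a_j}a|^{t+1}}|x_jy_j|^q(1-|a_j|^2)\to0$ as $|a|\to1$, which is exactly the defining vanishing condition for $T_{q,0}^\infty(Z)$. Lemma \ref{2.2}(ii) then forces $\{y_j\}\in T_{\frac{pq}{p-q}}^\infty(Z)$, and the remainder of the argument---passing to $g^{(n-k)}\in AT_{\frac{pq}{p-q}}^\infty(\lambda)$ through Lemmas \ref{3.2} and \ref{lp}---is identical to the end of the proof of Theorem \ref{th2}.

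The step I expect to require the most care is the dominated convergence in $u$: one must simultaneously verify the pointwise-in-$u$ vanishing (using that $T_g^{n,k}F_u$ lies in the little space for \emph{every} $u$) and exhibit a $u$-uniform envelope, while keeping in mind that the lattice coefficients $x_jr_j(u)$ depend on $u$ through the Rademacher functions although the operator norms and the $\tp$-norms of $F_u$ do not. Once this bridge between the pointwise and the discrete vanishing is in place, the rest of the argument is essentially a bookkeeping repetition of Theorem \ref{th2}.
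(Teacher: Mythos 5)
Your proposal is correct and follows essentially the same route as the paper: the identical H\"older splitting of (\ref{4.2}) for sufficiency, and for necessity the same Rademacher--Khinchine discretization applied to test functions built from $T_{p,0}^\infty(Z)$ sequences via Lemma \ref{3.6}, upgraded to the vanishing condition so that Lemma \ref{2.2}(ii) applies, followed by Lemma \ref{3.2}. Your explicit dominated-convergence step for transporting the pointwise-in-$u$ vanishing through the $u$-integration is in fact slightly more careful than the paper's write-up, which simply asserts that a single $r\in(0,1)$ works for all $u$.
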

\begin{proof}
{\bf Sufficiency.}
It is evident that Lemma \ref{lp} and $(\ref{4.2})$ imply that $T_g^{n,k}:AT_{p,0}^\infty(\a)\to AT_{q,0}^\infty(\b)$ is bounded.

{\bf Necessity.}
Assume that
$T_g^{n,k}:AT_{p,0}^\infty(\a)\to AT_{q,0}^\infty(\b)$ is bounded.
First, we define \( F_u \) as in $(\ref{4.3})$ for the sequence \( \{x_j\} \) that belongs to \( T_{p,0}^\infty(Z) \). Subsequently, it follows from Lemma \ref{3.6} that \( F_u \in \p0 \).
Fix $\epsilon>0$, for $t>0$, using Lemma \ref{lp}, there exists $r\in(0,1)$ such that 
\begin{align*}
\int_{\D}\frac{(1-|a|^2)^t}{|1-\ol{a}z|^{t+1}}|F_u^{(k)}(z)|^q|g^{(n-k)}(z)|^q(1-|z|^2)^{nq+\b+1}dA(z)<\epsilon^q
\end{align*}
whenever $r<|a|<1$. By following the same steps as in the proof of necessity in Theorem \ref{th2}, we obtain
\begin{align*}
\sum_{j=1}^{\infty}\frac{(1-|a|^2)^t}{|1-\ol{a_j}a|^{t+1}}|x_j|^q(1-|a_j|^2)^{2+nq-kq+\b+1-\frac{q(\a+2)}{p}}|g^{(n-k)}(a_j)|^q\lesssim\epsilon^q
\end{align*}
whenever $r<|a|<1$, which implies
\begin{align*}
	\{x_jg^{(n-k)}(a_j)(1-|a_j|^2)^{n-k+\frac{\b+2}{q}-\frac{\a+2}{p}}\}\in T_{q,0}^\infty(Z)
\end{align*}
Using Lemma \ref{2.2} $(ii)$, we have
\begin{align*}
	\{g^{(n-k)}(a_j)(1-|a_j|^2)^{n-k+\frac{\b+2}{q}-\frac{\a+2}{p}}\}\in T_{\frac{pq}{p-q}}^\infty(Z).
\end{align*}
Applying Lemma \ref{3.2}, we obtain
$g^{(n-k)}\in AT^\infty_{\frac{pq}{p-q}}\left(\lambda\right)$.
The proof is complete.
\end{proof}

Next, we investigate the boundedness of the operator $S_g^{n,0}$ from $\tp$ to $\tq$ and from $\p0$ to $\q0$ when \(0 < p,q < \infty\). 
Since the proofs for operators $T_g^{n,k}$ and $S_g^{n,0}$ are similar, we omit the proofs here and only present the conclusions.

\begin{Theorem}\label{sn1}
Let $0<p\le q<\infty$, $\a,\b>-2$ with $\frac{p\b-q\a}{p-q}>-2$, $g\in H(\D)$ and $n\in\NN$. Then the following statements are equivalent.
\begin{enumerate}
\item[(i)] $S_g^{n,0}:\tp\to\tq$ is bounded.

\item[(ii)] $S_g^{n,0}:\p0\to\q0$ is bounded.

\item[(iii)] $
\sup_{z\in\D}|g(z)|(1-|z|^2)^{\frac{\b+2}{q}-\frac{\a+2}{p}}<\infty.
$
\end{enumerate}
\end{Theorem}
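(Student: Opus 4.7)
The plan is to prove the equivalence of (i), (ii), and (iii) by a cycle of four implications: the two sufficiency directions (iii)$\Rightarrow$(i) and (iii)$\Rightarrow$(ii), and the two necessity directions (i)$\Rightarrow$(iii) and (ii)$\Rightarrow$(iii). The key observation that drives everything is the identity $(S_g^{n,0}f)^{(n)}(z)=f^{(n)}(z)g(z)$, which lets me compare the Littlewood--Paley characterizations of the source and target spaces (Lemma \ref{lp}) directly.

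For the sufficiency, I would start from Lemma \ref{lp} and write
$$
\|S_g^{n,0}f\|^q_{AT_q^\infty(\beta)}\asymp \sup_{a\in\D}\int_\D\frac{(1-|a|^2)^t}{|1-\ol{a}z|^{t+1}}|f^{(n)}(z)|^q|g(z)|^q(1-|z|^2)^{nq+\beta+1}\,dA(z).
$$
Using (iii) I absorb $|g(z)|^q\lesssim (1-|z|^2)^{-(\beta+2)+q(\alpha+2)/p}$, which leaves the integrand with weight $(1-|z|^2)^{nq-1+q(\alpha+2)/p}$. To convert to the Littlewood--Paley norm of $f$ in $AT_p^\infty(\alpha)$, I split $|f^{(n)}|^q=|f^{(n)}|^p|f^{(n)}|^{q-p}$ and bound the second factor by the pointwise estimate of Lemma \ref{z}, obtaining $|f^{(n)}|^{q-p}\lesssim \|f\|^{q-p}(1-|z|^2)^{-(q-p)((\alpha+2)/p+n)}$. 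A short arithmetic collapses the remaining exponent on $(1-|z|^2)$ to exactly $np+\alpha+1$, so the leftover integral is precisely the Littlewood--Paley quantity for $\|f\|^p_{AT_p^\infty(\alpha)}$. This yields (iii)$\Rightarrow$(i), and the identical computation, read through the vanishing-Carleson statement of Lemma \ref{lp}(ii), yields (iii)$\Rightarrow$(ii).

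For the necessity, I would use the test family $f_u(z)=(1-\ol{u}z)^{-(\alpha+2)/p}$ for $u\in\D$. From the computation already performed in the proof of Theorem \ref{th1} (via Lemma \ref{2.5}), $\|f_u\|_{AT_p^\infty(\alpha)}\lesssim 1$ uniformly in $u$, and by the remark preceding Theorem \ref{th10} the same functions belong to $AT_{p,0}^\infty(\alpha)$ with uniformly bounded norm. Applying Lemma \ref{lp} with $a=u$ and restricting the integral to $D(u,1/2)$, where $|1-\ol{u}z|\asymp 1-|u|^2\asymp 1-|z|^2$, I get
$$
\|S_g^{n,0}\|^q\gtrsim \|S_g^{n,0}f_u\|^q_{AT_q^\infty(\beta)}\gtrsim (1-|u|^2)^{\beta-q(\alpha+2)/p}\int_{D(u,1/2)}|g(z)|^q\,dA(z),
$$
and the subharmonicity estimate \cite[Proposition 4.13]{z1} then gives $|g(u)|^q(1-|u|^2)^{\beta+2-q(\alpha+2)/p}\lesssim \|S_g^{n,0}\|^q$, which is condition (iii). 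The same argument, with the test function regarded as an element of the little space, provides (ii)$\Rightarrow$(iii).

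The main obstacle is the bookkeeping in the sufficiency: one has to verify that after absorbing $|g|^q$ via (iii) and extracting $|f^{(n)}|^{q-p}$ via Lemma \ref{z}, the remaining weight on $(1-|z|^2)$ is precisely $np+\alpha+1$, so the residual integral is genuinely the Littlewood--Paley norm of $f$ and not something off by a power. Once this calculation is carried out the rest is formal, because the necessity argument is essentially a specialization of the one already used for $T_g^{n,k}$ in Theorem \ref{th1} with $k$ replaced by $n$ and $g^{(n-k)}$ replaced by $g$, and the four implications above together give all three equivalences.
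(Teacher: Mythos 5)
Your proposal is correct and is exactly the argument the paper intends: the paper omits the proof of Theorem \ref{sn1}, stating it is similar to Theorems \ref{th1} and \ref{th10}, and your write-up is precisely that adaptation (replace $f^{(k)}g^{(n-k)}$ by $f^{(n)}g$, i.e.\ take $k=n$ formally, so the exponent bookkeeping collapses to $np+\alpha+1$ just as in the $T_g^{n,k}$ case, and the same test functions $f_u$ give the necessity in both the big and little spaces).
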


\begin{Theorem}\label{sn2}
	Let $0<q<p<\infty$, $\a,\b>-2$ with $\frac{p\b-q\a}{p-q}>-2$, $g\in H(\D)$ and $n\in\NN$. Then the following statements are equivalent.
	\begin{enumerate}
	\item[(i)] $S_g^{n,0}:\tp\to\tq$ is bounded.
		
	\item[(ii)] $S_g^{n,0}:\p0\to\q0$ is bounded.
		
	\item[(iii)] 
	$
	g\in AT^\infty_{\frac{pq}{p-q}}(\frac{p\b-q\a}{p-q}).
	$
	\end{enumerate}
\end{Theorem}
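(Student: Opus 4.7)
The plan is to establish the equivalences in Theorem \ref{sn2} by mirroring the proofs of Theorems \ref{th2} and \ref{th20}, with one structural simplification: since
$$
(S_g^{n,0} f)^{(n)}(z) = f^{(n)}(z)\, g(z),
$$
the roles of $f^{(k)}$ and $g^{(n-k)}$ in the $T_g^{n,k}$ analysis are now played by $f^{(n)}$ and $g$ respectively. I will prove the chain $(iii)\Rightarrow(i)\Rightarrow(iii)$ for the large spaces and $(iii)\Rightarrow(ii)\Rightarrow(iii)$ for the little spaces.

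For the sufficiency directions $(iii)\Rightarrow(i)$ and $(iii)\Rightarrow(ii)$, I would apply Lemma \ref{lp} to $S_g^{n,0}f$ and then split the resulting Carleson integral
$$
\sup_{a\in\D}\int_\D \frac{(1-|a|^2)^t}{|1-\ol{a}z|^{t+1}}|f^{(n)}(z)|^q|g(z)|^q(1-|z|^2)^{nq+\beta+1}\,dA(z)
$$
by H\"older's inequality with exponents $p/q$ and $p/(p-q)$, splitting the weight $(1-|z|^2)^{nq+\beta+1}$ so that the first factor becomes the Littlewood--Paley integral for $\|f\|_{\tp}^q$ (via Lemma \ref{lp}(i)). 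A direct computation shows that the remaining weight attached to $|g(z)|^{pq/(p-q)}$ is exactly $(1-|z|^2)^{\lambda+1}$ with $\lambda=(p\beta-q\alpha)/(p-q)$, so the second factor equals $\|g\|_{AT^\infty_{pq/(p-q)}(\lambda)}^q$. The little-space version follows verbatim since Lemma \ref{lp}(ii) gives the vanishing analogue of the Carleson condition.

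For the necessity directions $(i)\Rightarrow(iii)$ and $(ii)\Rightarrow(iii)$, I would fix an $(r,\kappa)$-lattice $\{a_j\}$ and test the operator on the Rademacher-randomized atomic families
$$
F_u(z)=\sum_{j=1}^{\infty}x_jr_j(u)\frac{(1-|a_j|^2)^L}{(1-\ol{a_j}z)^{L+(\alpha+2)/p}},
$$
with $\{x_j\}\in T_p^\infty(Z)$ in case $(i)$ and $\{x_j\}\in T_{p,0}^\infty(Z)$ in case $(ii)$; by Lemmas \ref{3.4} and \ref{3.6}, $\|F_u\|_{\tp}\lesssim\|\{x_j\}\|_{T_p^\infty(Z)}$ uniformly in $u$, and $F_u\in\p0$ for the little-space test. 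Differentiating $F_u$ termwise $n$ times, inserting into the Littlewood--Paley characterization of $\|S_g^{n,0}F_u\|_{\tq}^q$ via Lemma \ref{lp}, integrating over $u\in[0,1]$, and applying Fubini together with Khinchine's inequality converts the Rademacher sum into an $\ell^2$-square; restricting this sum to the diagonal disks $D(a_j,\rho)$ yields the pointwise lower bound
$$
\Bigl(\sum_j|x_j|^2\frac{(1-|a_j|^2)^{2L}}{|1-\ol{a_j}z|^{2L+2n+2(\alpha+2)/p}}\Bigr)^{q/2}\gtrsim\sum_j|x_j|^q\frac{\chi_{D(a_j,\rho)}(z)}{(1-|a_j|^2)^{nq+q(\alpha+2)/p}}.
$$
Estimating $|g|^q$ on each $D(a_j,\rho)$ by the sub-mean value property \cite[Proposition 4.13]{z1} and bookkeeping the weight exponents, the key cancellation is $nq-nq=0$ on $(1-|a_j|^2)$ (replacing the $nq-kq$ of Theorem \ref{th2}); this leaves the clean exponent $(\beta+2)/q-(\alpha+2)/p$ and produces
$$
\bigl\|\bigl\{x_j\,g(a_j)(1-|a_j|^2)^{(\beta+2)/q-(\alpha+2)/p}\bigr\}\bigr\|_{T_q^\infty(Z)}\lesssim\|S_g^{n,0}\|\,\|\{x_j\}\|_{T_p^\infty(Z)}.
$$
Since $\{x_j\}$ is arbitrary, Lemma \ref{2.2}(i) (resp. (ii)) strips off the multiplier and gives $\{g(a_j)(1-|a_j|^2)^{(\beta+2)/q-(\alpha+2)/p}\}\in T_{pq/(p-q)}^\infty(Z)$ (resp.\ in $T_{pq/(p-q),0}^\infty(Z)$). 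A brief check confirms $(\lambda+2)(p-q)/(pq)=(\beta+2)/q-(\alpha+2)/p$, so Lemma \ref{3.2} immediately yields $g\in AT_{pq/(p-q)}^\infty(\lambda)$ without any further appeal to Lemma \ref{lp} on $g$. The only subtle point, and the place where an arithmetic slip would be easy, is verifying that with $k=n$ the weight bookkeeping closes up cleanly on both sides of H\"older and Khinchine; once that exponent identity is in hand, the rest of the argument is a direct transcription of the $T_g^{n,k}$ proofs.
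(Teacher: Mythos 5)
Your proposal is correct and is exactly the argument the paper intends: the paper omits the proof of Theorem \ref{sn2}, deferring to the proofs of Theorems \ref{th2} and \ref{th20}, and your adaptation (with $f^{(n)}g$ in place of $f^{(k)}g^{(n-k)}$) carries out that transcription with the right exponent bookkeeping, including the identity $(\lambda+2)(p-q)/(pq)=(\beta+2)/q-(\alpha+2)/p$ and the observation that no derivative-shifting step on $g$ is needed.
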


\section{Compactness}

In this section, we establish the compactness of the generalized integration operators $T_g^{n,k}:\tp\to\tq$ and $T_g^{n,k}:\p0\to\q0$ for $0<p,q<\infty$. Moreover, we study the compactness of the operators $S_g^{n,0}:\tp\to\tq$ and $S_g^{n,0}:\p0\to\q0$ for $0<p, q<\infty$.
For this purpose, we require the following lemmas.

\begin{Lemma}\label{le3}
Let $0<p,q<\infty$, $\a,\b>-2$, $g\in H(\D)$, $n\in\N$ and $k\in\N\cup\{0\}$ such that $0 \leq k<n$ and $T_g^{n,k}:AT_{p}^\infty(\a)\to AT_{q}^\infty(\b)$ is bounded.  Then $T_g^{n,k}(H^\infty)\subset\q0$.
\end{Lemma}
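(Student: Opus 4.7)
The plan is to chain two observations: $H^\infty\subset\p0$, and the fact that boundedness of $T_g^{n,k}:\tp\to\tq$ automatically upgrades to boundedness of $T_g^{n,k}:\p0\to\q0$. Composing them routes $H^\infty$ through $\p0$ into $\q0$.

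First I would verify that $H^\infty\subset\p0$. For $f\in H^\infty$ and $u\in\D$, the pointwise bound $|f(z)|\le\|f\|_\infty$ together with the routine polar-coordinate estimate $\int_{S(u)}(1-|z|^2)^{\a+1}\,dA(z)\lesssim(1-|u|)^{\a+3}$ gives
$$
\frac{1}{1-|u|^2}\int_{S(u)}|f(z)|^p(1-|z|^2)^{\a+1}\,dA(z)\lesssim\|f\|_\infty^p(1-|u|)^{\a+2}.
$$
Since $\a+2>0$, the right-hand side tends to $0$ as $|u|\to 1^-$, so $f\in\p0$.

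Next I would invoke the boundedness characterizations already proved in Section 3 to lift the hypothesis to the little-space setting. When $p\le q$, Theorem \ref{th1} characterizes boundedness on the big spaces by $U_g<\infty$, while Theorem \ref{th10} characterizes boundedness on the little spaces by the same $U_g<\infty$. When $p>q$, the pair Theorem \ref{th2} and Theorem \ref{th20} provides the common criterion $g^{(n-k)}\in AT_{pq/(p-q)}^\infty(\lambda)$. Thus in either regime the hypothesis immediately yields $T_g^{n,k}:\p0\to\q0$ is bounded.

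Combining the two steps, any $f\in H^\infty$ lies in $\p0$ by the first step, and then $T_g^{n,k}f\in\q0$ by the second, giving $T_g^{n,k}(H^\infty)\subset\q0$. The only mild subtlety, and hence the only candidate for the ``hard part'', is recognizing that the big-scale and little-scale boundedness characterizations coincide; once Theorems \ref{th1}--\ref{th20} are set side by side this is transparent, and no genuine obstacle arises.
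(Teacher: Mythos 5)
Your argument is correct, but it takes a genuinely different route from the paper's. The paper proves the lemma by direct estimation on $H^\infty$ functions: for $p\le q$ it extracts the quantity $U_g$ from the Littlewood--Paley integral of Lemma \ref{lp} (after disposing of the degenerate case $n-k+\frac{\beta+2}{q}-\frac{\alpha+2}{p}<0$, where $T_g^{n,k}=0$) and reduces to $\lim_{|a|\to1}(1-|a|^2)^{q(\alpha+2)/p}=0$; for $q<p$ it applies H\"older's inequality against the condition $g^{(n-k)}\in AT^\infty_{pq/(p-q)}(\lambda)$ from Theorem \ref{th2} and again reduces to the same vanishing limit. You instead factor the claim through the inclusion $H^\infty\subset AT_{p,0}^\infty(\alpha)$ (a fact the paper itself invokes without proof in part \emph{(ii)} of Lemma \ref{le4}, and which your Carleson-box estimate correctly justifies since $\alpha+2>0$) together with the observation that Theorems \ref{th1}/\ref{th10} and \ref{th2}/\ref{th20} give identical criteria on the big and little spaces, so big-space boundedness upgrades to little-space boundedness for free; there is no circularity, since those theorems are established in Section 3 independently of this lemma. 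Your route is shorter and reuses existing results, at the cost of leaning on Theorems \ref{th10} and \ref{th20}, whose proofs the paper only sketches, whereas the paper's computation is self-contained. One caveat shared by both arguments: in the regime $q<p$ each requires the hypothesis $\frac{p\beta-q\alpha}{p-q}>-2$ of Theorems \ref{th2} and \ref{th20}, which appears in Lemma \ref{le4} but is omitted from the statement of Lemma \ref{le3}; this is an imprecision of the statement rather than a defect of your proof.
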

\begin{proof}
Assume that $f\in H^\infty$. 
In the case $0<p\le q<\infty$, according to Theorem \ref{th1}, the boundedness of $T_g^{n,k}:AT_p^\infty(\a)\to AT_q^\infty(\b)$ implies that
$U_g=\sup_{z\in\D}|g^{(n-k)}(z)|(1-|z|^2)^{n-k+\frac{\b+2}{q}-\frac{\a+2}{p}}<\infty$.
If \( n - k + \frac{\beta + 2}{q} - \frac{\alpha + 2}{p} < 0 \), then \( T_g^{n,k} = 0 \), and hence, there is no need for further proof.
Now suppose that $n-k+\frac{\b+2}{q}-\frac{\a+2}{p}\ge0$. For sufficiently large $t>0$, using \cite[Lemma 3.10]{z1} we obtain
\begin{align*}
&\lim_{|a|\to 1}\int_{\D}\frac{(1-|a|^2)^t}{|1-\ol{a}z|^{t+1}}|(T_g^{n,k}f)^{(n)}(z)|^q(1-|z|^2)^{nq+\b+1}dA(z)\\
=&\lim_{|a|\to 1}\int_{\D}\frac{(1-|a|^2)^t}{|1-\ol{a}z|^{t+1}}|f^{(k)}(z)|^q|g^{(n-k)}(z)|^q(1-|z|^2)^{nq+\b+1}dA(z)\\
\lesssim&\sup_{z\in\D}|f^{(k)}(z)|^q(1-|z|^2)^{kq}\sup_{z\in\D}|g^{(n-k)}(z)|^q(1-|z|^2)^{\left(n-k+\frac{\b+2}{q}-\frac{\a+2}{p}\right)q}\\
&\cdot\lim_{|a|\to 1}\int_{\D}\frac{(1-|a|^2)^t(1-|z|^2)^{\frac{q(\a+2)}{p}-1}}{|1-\ol{a}z|^{t+1}}dA(z)\\
\lesssim&\|f\|_{H^\infty}^q U_g^q\lim_{|a|\to 1}(1-|a|^2)^{\frac{q(\a+2)}{p}}=0.
\end{align*}
Therefore, $T_g^{n,k}f\in\q0$ by Lemma \ref{lp}.

In the case $0<q<p<\infty$, for sufficiently large $t>0$, using Theorem \ref{th2}, H\"older's inequality and \cite[Lemma 3.10]{z1}, we get
\begin{align*}
&\lim_{|a|\to 1}\int_{\D}\frac{(1-|a|^2)^t}{|1-\ol{a}z|^{t+1}}|(T_g^{n,k}f)^{(n)}(z)|^q(1-|z|^2)^{nq+\b+1}dA(z)\\
=&\lim_{|a|\to 1}\int_{\D}\frac{(1-|a|^2)^t}{|1-\ol{a}z|^{t+1}}|f^{(k)}(z)|^q|g^{(n-k)}(z)|^q(1-|z|^2)^{nq+\b+1}dA(z)\\
\lesssim&\sup_{z\in\D}|f^{(k)}(z)|^q(1-|z|^2)^{kq}\lim_{|a|\to 1}\int_{\D}\frac{(1-|a|^2)^t}{|1-\ol{a}z|^{t+1}}|g^{(n-k)}(z)|^q(1-|z|^2)^{nq-kq+\b+1}dA(z)\\
\le&\|f\|_{H^\infty}^q \sup_{a\in\D}\left( \int_{\D}\frac{(1-|a|^2)^t}{|1-\ol{a}z|^{t+1}}|g^{(n-k)}(z)|^{\frac{pq}{p-q}}(1-|z|^2)^{(n-k)\frac{pq}{p-q}+\frac{p\b-q\a}{p-q}+1}dA(z) \right)^{\frac{p-q}{p}}\\
&\cdot\lim_{|a|\to 1}\left( \int_{\D}\frac{(1-|a|^2)^t}{|1-\ol{a}z|^{t+1}}(1-|z|^2)^{\a+1}dA(z)  \right)^{\frac{q}{p}}
\end{align*}

\begin{align*}
\le&\|f\|_{H^\infty}^q \|g^{(n-k)}\|^q_{AT^\infty_{\frac{pq}{p-q}}\left(\frac{p\b-q\a}{p-q}\right)}\lim_{|a|\to 1}(1-|a|^2)^{\frac{q(\a+2)}{p}}=0.
\end{align*}
Hence, $T_g^{n,k}f\in\q0$ by Lemma \ref{lp}. 
The proof is complete.
\end{proof}

Following the proof of Lemma \ref{le3}, we can derive the following lemma.

\begin{Lemma}\label{le3s}
Let $0<p, q<\infty$, $\a,\b>-2$ with $\frac{p\b-q\a}{p-q}>-2$, $g\in H(\D)$ and $n\in\NN$ such that $S_g^{n,0}:AT_{p}^\infty(\a)\to AT_{q}^\infty(\b)$ is bounded.  
Then $S_g^{n,0}(H^\infty)\subset\q0$.
\end{Lemma}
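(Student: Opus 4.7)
The proof will mirror the argument for Lemma \ref{le3} essentially verbatim, with the crucial replacement that for the operator $S_g^{n,0}$ one has the identity
$$
(S_g^{n,0}f)^{(n)}(z) = f^{(n)}(z)\, g(z),
$$
rather than $f^{(k)}(z)g^{(n-k)}(z)$. Since the roles of $f^{(k)}$ and $g^{(n-k)}$ in Lemma \ref{le3} are simply played here by $f^{(n)}$ and $g$, the estimates carry over after adjusting exponents.

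The plan is first to fix $f\in H^\infty$ and note that by Cauchy's estimates (or standard mean value estimates in $H^\infty$), $|f^{(n)}(z)|(1-|z|^2)^n \lesssim \|f\|_{H^\infty}$ uniformly in $z\in\D$. By Lemma \ref{lp}(ii), to conclude $S_g^{n,0}f\in\q0$ it suffices, for some (large) $t>0$, to show
$$
\lim_{|a|\to 1}\int_{\D}\frac{(1-|a|^2)^t}{|1-\ol{a}z|^{t+1}}|f^{(n)}(z)|^q|g(z)|^q(1-|z|^2)^{nq+\b+1}\,dA(z)=0.
$$

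In the regime $0<p\le q<\infty$, I invoke Theorem \ref{sn1} to obtain that either $g\equiv 0$ (in which case $S_g^{n,0}=0$ and there is nothing to prove), or else $\frac{\b+2}{q}-\frac{\a+2}{p}\ge 0$ and $U_g:=\sup_{z\in\D}|g(z)|(1-|z|^2)^{\frac{\b+2}{q}-\frac{\a+2}{p}}<\infty$. Factoring out the pointwise supremum of $|f^{(n)}(z)|^q(1-|z|^2)^{nq}$ and of $|g(z)|^q(1-|z|^2)^{(\frac{\b+2}{q}-\frac{\a+2}{p})q}$ leaves the integral
$$
\int_{\D}\frac{(1-|a|^2)^t}{|1-\ol{a}z|^{t+1}}(1-|z|^2)^{\frac{q(\a+2)}{p}-1}\,dA(z),
$$
which by the standard Forelli--Rudin estimate (\cite[Lemma 3.10]{z1}) is $\asymp (1-|a|^2)^{\frac{q(\a+2)}{p}}$ for $t$ large enough; since $\a+2>0$, this vanishes as $|a|\to 1$.

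In the regime $0<q<p<\infty$, I invoke Theorem \ref{sn2} to obtain $g\in AT^\infty_{pq/(p-q)}(\lambda)$ with $\lambda=\frac{p\b-q\a}{p-q}$. Bounding $|f^{(n)}(z)|^q \lesssim \|f\|_{H^\infty}^q(1-|z|^2)^{-nq}$ and then applying H\"older's inequality with conjugate exponents $\frac{p}{q}$ and $\frac{p}{p-q}$ splits the integral into the product of an integral involving $|g|^{pq/(p-q)}(1-|z|^2)^{\lambda+1}$ (uniformly bounded in $a$ by Lemma \ref{lp}(i) applied to $g$) and the factor
$$
\left(\int_{\D}\frac{(1-|a|^2)^t}{|1-\ol{a}z|^{t+1}}(1-|z|^2)^{\a+1}\,dA(z)\right)^{q/p}\lesssim (1-|a|^2)^{q(\a+2)/p},
$$
which again tends to $0$. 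I expect no real obstacle: the only subtlety is tracking that the Forelli--Rudin exponents are admissible (which requires $t$ chosen sufficiently large and uses $\a>-2$), and verifying the degenerate case in Case 1 where the power of $(1-|z|^2)$ paired with $g$ forces $g\equiv 0$.
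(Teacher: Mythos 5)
Your proposal is correct and follows exactly the route the paper intends: the paper gives no separate proof of Lemma \ref{le3s}, stating only that it follows the proof of Lemma \ref{le3}, and your argument is precisely that adaptation, replacing $f^{(k)}g^{(n-k)}$ by $f^{(n)}g$, using Theorems \ref{sn1} and \ref{sn2} in place of Theorems \ref{th1} and \ref{th2}, and checking that the exponent bookkeeping (including the degenerate case forcing $g\equiv 0$) still closes.
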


\begin{Lemma}\label{le4}
Let $0<p,q<\infty$, $\a,\b>-2$ with $\frac{p\b-q\a}{p-q}>-2$, $g\in H(\D)$, $n\in\N$ and $k\in\N\cup\{0\}$ such that $0 \leq k<n$.
If $T_g^{n,k}:AT_{p}^\infty(\a)\to AT_{q}^\infty(\b)$ is compact or $T_g^{n,k}:AT_{p,0}^\infty(\a)\to AT_{q,0}^\infty(\b)$ is compact, then $T_g^{n,k}(\tp)\subset\q0$.
\end{Lemma}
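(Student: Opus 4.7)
The plan is to prove $T_g^{n,k}f \in \q0$ for every $f \in \tp$ by approximating $f$ with its dilations and invoking the compactness hypothesis. For $f \in \tp$ and $0<r<1$, set $f_r(z)=f(rz)$. Each $f_r$ extends holomorphically to a neighborhood of $\ol{\D}$, so $f_r \in H^\infty$; and since $\alpha>-2$, the elementary estimate $\int_{S(u)}(1-|z|^2)^{\alpha+1}dA(z)\lesssim (1-|u|^2)^{\alpha+3}$ yields $H^\infty \subset \p0 \subset \tp$. Moreover $f_r\to f$ together with all of its derivatives, uniformly on compact subsets of $\D$, as $r\to 1^-$.

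The first nontrivial step is to verify the uniform bound $\sup_{0<r<1}\|f_r\|_{\tp} \lesssim \|f\|_{\tp}$ (and the analogous bound in $\p0$). I would apply the Littlewood--Paley formula of Lemma \ref{lp} to $f_r^{(n)}(z)=r^n f^{(n)}(rz)$ and then perform the substitution $w=rz$. After rewriting the kernel $(1-|a|^2)^t/|1-\ol{a}z|^{t+1}$ in terms of the new variables $ra$ and $w$, the resulting integral is controlled by the corresponding $t$-test integral for $f^{(n)}$ evaluated at the point $ra\in\D$, whose supremum is itself bounded by $\|f\|_{\tp}^p$ via Lemma \ref{lp} once more; the elementary monotonicity $(1-|w|^2/r^2) \le (1-|w|^2)$ and standard kernel estimates (in the spirit of Lemma \ref{2.5}) absorb the extra powers of $r$.

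With uniform boundedness in hand, fix a sequence $r_m\to 1^-$. In Case 1, $\{f_{r_m}\}$ is bounded in $\tp$; in Case 2, it is bounded in $\p0$. Either way, the compactness hypothesis produces a subsequence $\{T_g^{n,k}f_{r_{m_j}}\}$ that converges in $\tq$ to some limit $h$. Convergence in the $\tq$ quasinorm implies locally uniform convergence, by the standard tent-space pointwise growth estimate $|F(z)|\lesssim \|F\|_{\tq}/(1-|z|^2)^{(\beta+2)/q}$ analogous to Lemma \ref{z}. On the other hand, the uniform-on-compacta convergence $f_{r_m}^{(k)}g^{(n-k)}\to f^{(k)}g^{(n-k)}$ followed by $n$-fold integration based at $0$ gives $T_g^{n,k}f_{r_m}\to T_g^{n,k}f$ locally uniformly. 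Identifying the two pointwise limits forces $h=T_g^{n,k}f$.

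Finally, by Lemma \ref{le3} each $T_g^{n,k}f_{r_m}\in\q0$, since $f_{r_m}\in H^\infty$. In Case 2 the convergence already takes place in $\q0$, so $T_g^{n,k}f=h\in\q0$ directly; in Case 1 one only needs the easy fact that $\q0$ is closed in $\tq$, which follows from an $\epsilon$-quasi-triangle estimate on each Carleson box, to reach the same conclusion. The principal obstacle, as flagged above, is the uniform boundedness of the dilations in the tent-space (quasi)norm, precisely because that norm is a supremum over Carleson boxes and does not transform as cleanly under $z\mapsto rz$ as, say, Hardy or Bergman norms; recasting the norm via the Littlewood--Paley formula of Lemma \ref{lp} is what makes the change of variable tractable.
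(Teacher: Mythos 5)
Your argument is correct and is essentially the paper's own proof: dilations $f_r\in H^\infty\subset AT_{p,0}^\infty(\alpha)$, the uniform bound $\|f_r\|_{AT_p^\infty(\alpha)}\lesssim\|f\|_{AT_p^\infty(\alpha)}$, compactness to extract a convergent subsequence, identification of the limit with $T_g^{n,k}f$ via locally uniform convergence, and Lemma~\ref{le3} together with the closedness of $AT_{q,0}^\infty(\beta)$. The only difference is that the uniform dilation bound you flag as the principal obstacle is simply quoted in the paper from \cite[Corollary 3.8]{cw} rather than re-derived.
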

\begin{proof}
{\it (i)} Assume that $f\in \tp$. For $r\in(0,1)$, using Lemma \ref{le3}, we have $T_g^{n,k}f_r\in\q0$. Here $f_r$ is defined by $f_r(z)=f(rz)$ for $z\in\D$. By \cite[Corollary 3.8]{cw},  we see that 
\[ \|f_r\|_{AT_p^\infty(\alpha)} \lesssim \|f\|_{AT_p^\infty(\alpha)}. \] 
Given that \( f_r \) converges uniformly to \( f \) on compact subsets of \( \mathbb{D} \) as \( r \to 1 \), the compactness of the operator \( T_g^{n,k}: AT_p^\infty(\alpha) \to AT_q^\infty(\beta) \) implies the existence of a sequence \( \{r_j\} \) approaching 1 such that \( T_g^{n,k} f_{r_j} \) converges to \( T_g^{n,k} f \) in \( AT_q^\infty(\beta) \). Consequently, \( T_g^{n,k} f \in AT_{q,0}^\infty(\beta) \).

{\it (ii)} Assume that \( f \in AT_p^\infty(\alpha) \) and let \( \{r_j\} \subset (0,1) \) be a sequence converging to 1. Given that \( f_{r_j} \in H^\infty \subset AT_{p,0}^\infty(\alpha) \) and \( \|f_{r_j}\|_{AT_p^\infty(\alpha)} \lesssim \|f\|_{AT_p^\infty(\alpha)} \), the compactness of the operator \( T_g^{n,k}: AT_{p,0}^\infty(\alpha) \to AT_{q,0}^\infty(\beta) \) ensures the existence of a subsequence \( \{r_{j_m}\} \) and an element \( h \in AT_{q,0}^\infty(\beta) \) such that \( T_g^{n,k} f_{r_{j_m}} \to h \) in \( AT_q^\infty(\beta) \), which, by Lemma \ref{z}, also implies uniform convergence on compact subsets of \( \mathbb{D} \). Therefore, \( f_{r_{j_m}}^{(k)} g^{(n-k)} \to h^{(n)} \) uniformly on compact subsets of \( \mathbb{D} \). On the other hand, since \( f_{r_{j_m}} \to f \) uniformly on compact subsets of \( \mathbb{D} \), it follows that \( f^{(k)} g^{(n-k)} = h^{(n)} \), and thus  $(T_g^{n,k} f-h)^{(n)}=0$. Therefore, $T_g^{n,k}f(z)=h(z)+p(z)$, where \( p(z) \) is a polynomial of degree not higher than \( n-1 \) and $p\in\q0$. Hence, $T_g^{n,k}f\in\q0$.
\end{proof}

By Lemma \ref{le3s} and via a proof analogous to Lemma \ref{le4}, we establish the following lemma. The proof is omitted.

\begin{Lemma}\label{le4s}
Let $0<p, q<\infty$, $\a,\b>-2$ with $\frac{p\b-q\a}{p-q}>-2$, $g\in H(\D)$ and $n\in\NN$.
If $S_g^{n,0}:AT_{p}^\infty(\a)\to AT_{q}^\infty(\b)$ is compact or $S_g^{n,0}:AT_{p,0}^\infty(\a)\to AT_{q,0}^\infty(\b)$ is compact, then $S_g^{n,0}(\tp)\subset\q0$.
\end{Lemma}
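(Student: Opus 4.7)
The plan is to follow the template of the proof of Lemma \ref{le4} nearly verbatim, replacing $T_g^{n,k}$ by $S_g^{n,0}$ throughout and invoking Lemma \ref{le3s} in place of Lemma \ref{le3}. The only structural change is that $(S_g^{n,0}f)^{(n)}(z)=f^{(n)}(z)g(z)$, which is handled in exactly the same way as $f^{(k)}(z)g^{(n-k)}(z)$ was in the previous lemma, so no new analytic difficulty arises.

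\textbf{Case (i): compactness of $S_g^{n,0}:\tp\to\tq$.} Fix $f\in\tp$ and consider the dilations $f_r(z)=f(rz)$ for $r\in(0,1)$. Since $f_r\in H^\infty$, Lemma \ref{le3s} yields $S_g^{n,0}f_r\in\q0$. By \cite[Corollary 3.8]{cw} the norms satisfy $\|f_r\|_{\tp}\lesssim\|f\|_{\tp}$ uniformly in $r$, and $f_r\to f$ uniformly on compact subsets of $\D$ as $r\to 1$. The compactness hypothesis then produces a sequence $r_j\to 1$ with $S_g^{n,0}f_{r_j}\to S_g^{n,0}f$ in $\tq$. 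Since $\q0$ is closed in $\tq$ (an immediate consequence of the Littlewood--Paley characterization in Lemma \ref{lp}), the limit $S_g^{n,0}f$ lies in $\q0$.

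\textbf{Case (ii): compactness of $S_g^{n,0}:\p0\to\q0$.} Start from an arbitrary $f\in\tp$ and pick any sequence $r_j\to 1$. Now $f_{r_j}\in H^\infty\subset\p0$ with $\|f_{r_j}\|_{\tp}\lesssim\|f\|_{\tp}$, so compactness of $S_g^{n,0}:\p0\to\q0$ extracts a subsequence $\{r_{j_m}\}$ and an element $h\in\q0$ with $S_g^{n,0}f_{r_{j_m}}\to h$ in $\tq$. By Lemma \ref{z}, this norm convergence upgrades to uniform convergence on compact subsets of $\D$; standard complex analysis then passes this to all derivatives, so $f_{r_{j_m}}^{(n)}g\to h^{(n)}$ locally uniformly. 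On the other hand $f_{r_{j_m}}\to f$ locally uniformly, hence $f_{r_{j_m}}^{(n)}\to f^{(n)}$ locally uniformly as well. Comparing the two limits gives $f^{(n)}g=h^{(n)}$, i.e.\ $(S_g^{n,0}f-h)^{(n)}\equiv 0$, so $S_g^{n,0}f=h+p$ for some polynomial $p$ of degree at most $n-1$. Since both $h$ and $p$ belong to $\q0$, so does $S_g^{n,0}f$.

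The one point requiring mild care is the derivative-convergence step in Case (ii), namely passing from $\tq$-convergence to locally uniform convergence of the $n$-th derivative; this is already packaged in the growth estimate of Lemma \ref{z} combined with Cauchy's formula. No further obstacle is anticipated, and the $\frac{p\b-q\a}{p-q}>-2$ hypothesis enters only through Lemma \ref{le3s}.
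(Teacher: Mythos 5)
Your proof is correct and matches the paper's intent exactly: the paper itself omits the proof of Lemma \ref{le4s}, stating only that it follows from Lemma \ref{le3s} by an argument analogous to Lemma \ref{le4}, which is precisely the substitution $T_g^{n,k}\mapsto S_g^{n,0}$, $f^{(k)}g^{(n-k)}\mapsto f^{(n)}g$ that you carry out. The small point you flag about upgrading norm convergence to locally uniform convergence of derivatives is handled the same way (via Lemma \ref{z}) in the paper's proof of Lemma \ref{le4}, so there is nothing further to add.
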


The next lemma can be proved in a standard way (see \cite[Proposition 3.11]{cm}).

\begin{Lemma}
Let X and Y be two Banach spaces, and T be a bounded linear operator.
Then $T:X\to Y$ is compact if and only if 
$T:X\to Y$ is bounded, and for any bounded sequence $\{f_j\}$ in $X$ which converges to zero uniformly on compact subsets of $\mathbb{D}$, we have $Tf_j\to0$ in $Y$ as $j\to\infty$.
\end{Lemma}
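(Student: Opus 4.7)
The plan is to prove this standard compactness criterion by combining Montel's theorem for normal families of analytic functions with the growth estimate of Lemma \ref{z}, which bridges norm convergence in the tent spaces with uniform convergence on compact subsets of $\mathbb{D}$. Although the statement refers to abstract Banach spaces $X$ and $Y$, the argument tacitly uses that in the applications $X$ and $Y$ will be among $\tp$, $\p0$, $\tq$, $\q0$, where Lemma \ref{z} guarantees both the normal family property of norm-bounded sets and that norm convergence implies local uniform convergence.

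For the necessity direction, I would suppose $T$ is compact and take a bounded sequence $\{f_j\}\subset X$ with $f_j\to 0$ uniformly on compact subsets of $\mathbb{D}$. Rather than showing $Tf_j\to 0$ in $Y$ directly, I would prove that every subsequence of $\{Tf_j\}$ has a further subsequence tending to zero, which forces the full sequence to vanish. Given any subsequence, compactness yields a further subsequence $Tf_{j_k}\to h$ in the norm of $Y$; Lemma \ref{z} upgrades this to uniform convergence on compact subsets of $\mathbb{D}$. Meanwhile, Cauchy's estimates applied to $f_{j_k}\to 0$ uniformly on compacts give the convergence of all derivatives $f_{j_k}^{(m)}\to 0$ uniformly on compacts, so the explicit formulas $(T_g^{n,k}f)^{(n)}=f^{(k)}g^{(n-k)}$ and $(S_g^{n,0}f)^{(n)}=f^{(n)}g$ together with repeated integration force $Tf_{j_k}\to 0$ uniformly on compact subsets. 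Uniqueness of pointwise limits then gives $h=0$.

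For the sufficiency direction, I would assume the hypothesis and let $\{f_j\}\subset X$ be an arbitrary bounded sequence. Lemma \ref{z} makes $\{f_j\}$ locally uniformly bounded on $\mathbb{D}$, so Montel's theorem yields a subsequence $f_{j_k}$ converging uniformly on compact subsets to some analytic $f$. A Fatou-type argument on the Littlewood-Paley characterization in Lemma \ref{lp} produces $f\in\tp$ with $\|f\|_{\tp}\lesssim\sup_j\|f_j\|_X$. Then $g_k:=f_{j_k}-f$ is bounded in the ambient tent space and tends to zero uniformly on compact subsets, so the hypothesis yields $Tg_k\to 0$ in $Y$, i.e.\ $Tf_{j_k}\to Tf$ in $Y$; this furnishes the required convergent subsequence of $\{Tf_j\}$.

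The main obstacle I anticipate is ensuring $f\in X$ when $X=\p0$, where the boundary vanishing condition must be checked in addition to the norm bound. I would address this either via the atomic decomposition (Lemma \ref{3.6}), which represents $\p0$ through sequences in $T^\infty_{p,0}(Z)$ and allows a diagonal argument, or by observing that every $Tf_{j_k}$ already lies in $\q0$ (a closed subspace of $\tq$), so the $\tq$-limit $Tf$ produced above automatically belongs to $\q0$, bypassing the need to place $f$ itself into $\p0$.
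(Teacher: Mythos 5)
The paper offers no proof of this lemma, only a pointer to the standard argument of \cite{cm}, and your proposal is essentially that argument: Montel's theorem plus the growth estimate of Lemma \ref{z} to pass between norm convergence and local uniform convergence, a subsequence-of-subsequence argument for necessity, and extraction of a locally uniform limit followed by an application of the hypothesis for sufficiency. For $X=\tp$ your sketch is correct (modulo the harmless point that Lemma \ref{z} is stated for $n\in\N$, so the local uniform boundedness of the $f_j$ themselves requires the $n=0$ analogue, which follows from the same subharmonicity estimate or from Lemma \ref{3.2}).

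The genuine gap is in your treatment of $X=\p0$ in the sufficiency direction. The locally uniform limit $f$ of a norm-bounded sequence in $\p0$ need \emph{not} lie in $\p0$: take any $h\in\tp\setminus\p0$ and $f_j(z)=h(r_jz)$ with $r_j\to1$; each $f_j\in H^\infty\subset\p0$, the sequence is norm-bounded in $\tp$ by \cite[Corollary 3.8]{cw}, and $f_j\to h$ locally uniformly. So your first fix (forcing $f\in\p0$ via the atomic decomposition of Lemma \ref{3.6}) cannot succeed, because the claim it is meant to establish is false; and your second fix does not address the actual problem, which is that the hypothesis applies only to bounded sequences \emph{in $X$}, so you may not apply it to $f_{j_k}-f$ when $f\notin\p0$ (it is not that the limit $Tf$ might land outside $\q0$, it is that the convergence $Tf_{j_k}\to Tf$ was never established). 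The standard repair avoids the limit function altogether: show that $\{Tf_{j_k}\}$ is Cauchy in $Y$ by applying the hypothesis to differences. If it were not Cauchy, there would exist $\varepsilon>0$ and indices $k_m,l_m\to\infty$ with $\|Tf_{j_{k_m}}-Tf_{j_{l_m}}\|_Y\geq\varepsilon$; but $f_{j_{k_m}}-f_{j_{l_m}}$ lies in $X$, is norm-bounded, and converges to $0$ locally uniformly, so the hypothesis forces $\|T(f_{j_{k_m}}-f_{j_{l_m}})\|_Y\to0$, a contradiction. This version works uniformly for $\tp$ and $\p0$ and is what the cited ``standard way'' intends.
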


We also need the following lemma, which can be found in \cite{ll}.

\begin{Lemma}\label{2.1}
	For $0<r<1$, let $\chi_{\{z:|z|<r\}}$ be the characteristic function of the set $\{z:|z|<r\}$. If $\mu$ is a Carleson measure on $\mathbb{D}$,
	then $\mu$ is a vanishing Carleson measure if and only if $\|\mu-\mu_{r}\|\rightarrow 0$ as $r\rightarrow 1^{-}$, where $d\mu_{r}=\chi_{\{z:|z|<r\}}d\mu$.
\end{Lemma}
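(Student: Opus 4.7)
The plan is to prove the equivalence by treating each direction separately, relying on two elementary facts. First, any Carleson measure on $\mathbb{D}$ is automatically finite: taking $I = \mathbb{T}$ in the definition gives $\mu(\mathbb{D}) = \mu(S(\mathbb{T})) \leq 2\pi \|\mu\|_{CM}$, so monotone convergence yields $\mu(\{|z| \geq r\}) \to 0$ as $r \to 1^-$. Second, there is a geometric inclusion $S(I) \subseteq \{z \in \mathbb{D} : |z| \geq 1 - |I|/(2\pi)\}$ (up to the normalization of the Carleson box), which is immediate from the definition of $S(re^{i\theta})$ given in the excerpt.

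For the forward direction, I assume $\mu$ is a vanishing Carleson measure and aim to show $\|\mu - \mu_r\|_{CM} \to 0$. Fix $\varepsilon > 0$. By the vanishing hypothesis, there exists $\delta > 0$ such that $\mu(S(I))/|I| < \varepsilon$ whenever $|I| < \delta$. I would split the supremum defining $\|\mu - \mu_r\|_{CM}$ into two regimes. For arcs with $|I| < \delta$, the trivial bound $(\mu - \mu_r)(S(I)) \leq \mu(S(I))$ yields $(\mu - \mu_r)(S(I))/|I| < \varepsilon$. For arcs with $|I| \geq \delta$, I would use $(\mu - \mu_r)(S(I)) \leq \mu(\{|z| \geq r\})$, so the ratio is at most $\mu(\{|z| \geq r\})/\delta$. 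Choosing $r$ close enough to $1$ so that $\mu(\{|z| \geq r\}) < \varepsilon \delta$ — possible by the finiteness noted above — gives a uniform bound $\|\mu - \mu_r\|_{CM} \leq \varepsilon$.

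For the reverse direction, I assume $\|\mu - \mu_r\|_{CM} \to 0$ and want to conclude that $\mu$ is vanishing. Given $\varepsilon > 0$, pick $r \in (0,1)$ with $\|\mu - \mu_r\|_{CM} < \varepsilon$. By the geometric inclusion recalled above, for any arc $I$ with $|I|$ sufficiently small that $1 - |I|/(2\pi) > r$, one has $S(I) \subseteq \{|z| > r\}$, whence $\mu_r(S(I)) = 0$ and therefore
\[
\frac{\mu(S(I))}{|I|} = \frac{(\mu - \mu_r)(S(I))}{|I|} \leq \|\mu - \mu_r\|_{CM} < \varepsilon.
\]
Letting $|I| \to 0$ shows $\limsup_{|I| \to 0} \mu(S(I))/|I| \leq \varepsilon$, and since $\varepsilon$ was arbitrary, $\mu$ is vanishing.

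The argument is quite routine, and I do not expect a substantive obstacle. The only conceptual points are the automatic finiteness of Carleson measures (so that $\mu(\{|z|\geq r\})\to 0$) and the geometric inclusion of small Carleson boxes into the thin annulus $\{|z| > r\}$; once these are in place, the two directions are short and symmetric.
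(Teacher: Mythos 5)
Your argument is correct: the forward direction correctly combines the finiteness of Carleson measures (hence $\mu(\{|z|\ge r\})\to 0$ by downward continuity) with the split of arcs into $|I|<\delta$ and $|I|\ge\delta$, and the reverse direction correctly uses the inclusion of small Carleson boxes in the annulus $\{|z|>r\}$. Note that the paper offers no proof of this lemma at all --- it is quoted from the reference of Liu and Lou --- so there is nothing to compare against; your write-up is the standard argument one would expect there, and I see no gap in it.
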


Now, we can state and prove the main results of this section.

\begin{Theorem}\label{th3}
	Let $0<p\le q<\infty$, $\a,\b>-2$, $g\in H(\D)$, $n\in\N$ and $k\in\N\cup\{0\}$ such that $0 \leq k<n$.
	Then $T_g^{n,k}:AT_p^\infty(\a)\to AT_q^\infty(\b)$ is compact if and only if 
	\begin{align*}
	\lim_{|z|\to1}|g^{(n-k)}(z)|(1-|z|^2)^{n-k+\frac{\b+2}{q}-\frac{\a+2}{p}}=0.
	\end{align*}
\end{Theorem}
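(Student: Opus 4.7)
The proof follows the structure of Theorem \ref{th1} but uses the compactness criterion stated just before Theorem \ref{th3}: assuming $T_g^{n,k}:AT_p^\infty(\alpha)\to AT_q^\infty(\beta)$ is bounded, it is compact if and only if $\|T_g^{n,k}f_j\|_{\tq}\to 0$ for every bounded sequence $\{f_j\}\subset\tp$ with $f_j\to 0$ uniformly on compact subsets of $\D$. Throughout, abbreviate $\sigma:=n-k+\frac{\beta+2}{q}-\frac{\alpha+2}{p}$, so the claimed vanishing condition reads $\lim_{|z|\to 1^-}|g^{(n-k)}(z)|(1-|z|^2)^{\sigma}=0$.

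For the sufficiency direction, let $\{f_j\}$ be as above. Following the sufficiency half of Theorem \ref{th1} verbatim, but inserting the pointwise bound of Lemma \ref{z} to convert $|f_j^{(k)}|^{q-p}$ into $\|f_j\|^{q-p}_{\tp}(1-|z|^2)^{-(q-p)((\alpha+2)/p+k)}$, I arrive at
\[
\|T_g^{n,k}f_j\|^q_{\tq}\lesssim\|f_j\|^{q-p}_{\tp}\sup_{a\in\D}\int_\D\frac{(1-|a|^2)^t}{|1-\bar{a}z|^{t+1}}|f_j^{(k)}(z)|^p(1-|z|^2)^{kp+\alpha+1}\,|g^{(n-k)}(z)|^q(1-|z|^2)^{q\sigma}\,dA(z).
\]
Fix $\epsilon>0$ and choose $r_\epsilon\in(0,1)$ so that $|g^{(n-k)}(z)|(1-|z|^2)^{\sigma}<\epsilon$ for $|z|>r_\epsilon$. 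Splitting the integral at $|z|=r_\epsilon$, the outer piece is bounded by $\epsilon^q$ times the Carleson-type integral of $|f_j^{(k)}|^p(1-|z|^2)^{kp+\alpha+1}$, which is $\lesssim\|f_j\|^p_{\tp}$ by Lemma \ref{lp}; the inner piece is bounded by a constant depending on $r_\epsilon$ times $\bigl(\sup_{|z|\le r_\epsilon}|f_j^{(k)}(z)|\bigr)^p$, and this tends to $0$ as $j\to\infty$ by Cauchy's estimates applied to $f_j\to 0$ on a slightly larger compact set. Taking $\limsup_j$ and then $\epsilon\downarrow 0$ yields $\|T_g^{n,k}f_j\|_{\tq}\to 0$.

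For necessity, assume $T_g^{n,k}$ is compact and use the test family
\[
\phi_u(z)=\frac{(1-|u|^2)^L}{(1-\bar{u}z)^{L+\frac{\alpha+2}{p}}},\quad u\in\D,
\]
with $L>\max\{1,1/p\}$. These satisfy $\phi_u\to 0$ uniformly on each compact subset of $\D$ as $|u|\to 1^-$. I will verify that $\sup_{u\in\D}\|\phi_u\|_{\tp}<\infty$ by computing $\phi_u'$ explicitly, applying Lemma \ref{lp} with $n=1$, and bounding the resulting integral via Lemma \ref{2.5} with an admissible $t\in(0,p+\alpha+2)$. Granting this uniform bound, compactness forces $\|T_g^{n,k}\phi_u\|_{\tq}\to 0$ as $|u|\to 1^-$. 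On the other hand, the lower bound from the necessity half of Theorem \ref{th1} (restrict the supremum to $a=u$, restrict the integral to $D(u,\tfrac12)$, where $|1-\bar{u}z|\asymp 1-|u|^2$ and $|\phi_u^{(k)}(z)|\asymp(1-|u|^2)^{-((\alpha+2)/p+k)}$, then finish by submean-value for the subharmonic function $|g^{(n-k)}|^q$) gives
\[
\|T_g^{n,k}\phi_u\|^q_{\tq}\gtrsim(1-|u|^2)^{q\sigma}|g^{(n-k)}(u)|^q,
\]
so the desired vanishing condition follows.

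The main technical point is establishing the uniform norm bound $\sup_{u\in\D}\|\phi_u\|_{\tp}\lesssim 1$ for \emph{arbitrary} $u\in\D$: Lemma \ref{3.4} only supplies this when $u$ lies in the chosen lattice, so bypassing the lattice requires the direct Lemma \ref{lp}--Lemma \ref{2.5} computation described above, with careful attention to the admissible range of $t$ so that the pointwise estimate $\frac{(1-|a|^2)^t(1-|u|^2)}{|1-\bar{a}u|^{t+1}}\lesssim 1$ can be extracted uniformly in $a,u\in\D$.
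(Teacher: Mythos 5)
Your proposal is correct and follows essentially the same route as the paper: the same splitting of the Carleson-type integral at a radius $r_\epsilon$ for sufficiency (small $g^{(n-k)}(z)(1-|z|^2)^{\sigma}$ outside, uniform convergence of $f_j^{(k)}$ inside), and the same kind of normalized reproducing-kernel test family for necessity (the paper takes the exponent $\tfrac{1}{2p}$ where you take $L$, both verified uniformly bounded via Lemma \ref{2.5} and both tending to $0$ on compacta), followed by the identical $D(u,\tfrac12)$ sub-mean-value lower bound. The technical points you flag (the admissible range of $t$ and the uniform bound off the lattice) are handled in the paper exactly as you describe.
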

\begin{proof}
{\bf Sufficiency.}
Suppose that $\lim_{|z|\to1}|g^{(n-k)}(z)|(1-|z|^2)^{n-k+\frac{\b+2}{q}-\frac{\a+2}{p}}=0$ holds. For $\epsilon>0$, there exists $r\in(0,1)$ such that
$$
|g^{(n-k)}(z)|(1-|z|^2)^{n-k+\frac{\b+2}{q}-\frac{\a+2}{p}}<\epsilon
$$
whenever $r\le|z|<1$. 
Consider a bounded sequence $\{f_j\} \subset AT_p^\infty(\alpha)$ that converges to 0 uniformly on compact subsets of $\mathbb{D}$. 
There exists $M>0$ such that $j>M$, we have $|f_j^{(k)}(z)|<\epsilon$ for $z\in r\D$. Consequently, for $t>\frac{q(\a+2)}{p}+kq$, using Lemma \ref{lp}, \cite[Lemma 3.10]{z1}, and Lemma \ref{z}, we derive that
\begin{align*}
&\|T_g^{n,k}f_j\|^q_{\tq}\\
\asymp&\sup_{a\in\D}\left( \int_{r\D}+\int_{\D\backslash r\D}  \right)\frac{(1-|a|^2)^t}{|1-\ol{a}z|^{t+1}}|f^{(k)}_j(z)|^q|g^{(n-k)}(z)|^q(1-|z|^2)^{nq+\b+1}dA(z)\\
\le&\epsilon^q\sup_{z\in\D}|g^{(n-k)}(z)|^q(1-|z|^2)^{\left(n-k+\frac{\b+2}{q}-\frac{\a+2}{p}\right)q}\sup_{a\in\D}\int_{\D}\frac{(1-|a|^2)^t}{|1-\ol{a}z|^{t+1}}(1-|z|^2)^{kq-1+\frac{q(\a+2)}{p}}dA(z)\\
&+\epsilon^q\sup_{a\in\D}\int_{\D}\frac{(1-|a|^2)^t}{|1-\ol{a}z|^{t+1}}|f^{(k)}_j(z)|^q(1-|z|^2)^{kq-1+\frac{q(\a+2)}{p}}dA(z)\\
\lesssim& \epsilon^qU_g^q+\epsilon^q
\|f_j\|^{q-p}_{\tp}\sup_{a\in\D}\int_{\D}\frac{(1-|a|^2)^t}{|1-\ol{a}z|^{t+1}}|f_j^{(k)}(z)|^p(1-|z|^2)^{kp+\a+1}dA(z)\\
\lesssim&\epsilon^q\left(  U_g^q   +\sup_j \|f_j\|_{\tp}^q  \right)
\end{align*}
whenever $j>M$. Here $U_g$ is defined in Theorem \ref{th1}. The arbitrariness of $\epsilon$ implies $T_g^{n,k}:AT_p^\infty(\a)\to AT_q^\infty(\b)$ is compact.

{\bf Necessity.}
Suppose that $T_g^{n,k}:AT_p^\infty(\a)\to AT_q^\infty(\b)$ is compact. 
Let $\{z_j\}$ be a sequence with $\lim_{j\to\infty}|z_j|=1$. Set 
$$
f_j(z)=\frac{(1-|z_j|^2)^{\frac{1}{2p}}}{(1-\ol{z_j}z)^{\frac{\a+2}{p}+\frac{1}{2p}}},\quad z\in\D.
$$
Then, \(f_j \in \tp\), with \(\|f_j\|_{\tp} \lesssim 1\), and \(f_j\) converges uniformly to 0 on compact subsets of \(\mathbb{D}\) as \(j \to \infty\). By the assumption, it follows that
$\|T_g^{n,k}f_j\|_{\tq}\to 0$ as $j\to\infty$.
Similar to the proof of Theorem \ref{th1}, we deduce that
$$
\|T_g^{n,k}f_j\|_{\tq}\gtrsim|g^{(n-k)}(z_j)|(1-|z_j|^2)^{n-k+\frac{\b+2}{q}-\frac{\a+2}{p}},
$$
which implies the desired result.
\end{proof}

\begin{Theorem}\label{th30}
	Let $0<p\le q<\infty$, $\a,\b>-2$, $g\in H(\D)$, $n\in\N$ and $k\in\N\cup\{0\}$ such that $0 \leq k<n$.
	Then $T_g^{n,k}:AT_{p,0}^\infty(\a)\to AT_{q,0}^\infty(\b)$ is compact if and only if 
	\begin{align*}
		\lim_{|z|\to1}|g^{(n-k)}(z)|(1-|z|^2)^{n-k+\frac{\b+2}{q}-\frac{\a+2}{p}}=0.
	\end{align*}
\end{Theorem}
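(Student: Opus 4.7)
The plan is to mirror the proof of Theorem \ref{th3} almost verbatim, exploiting the fact that the test functions employed there already lie in $\p0$ and the same integral characterization via Lemma \ref{lp} remains available in both the source and target spaces.

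For sufficiency, I would first observe that the limit hypothesis forces $U_g := \sup_{z\in\D}|g^{(n-k)}(z)|(1-|z|^2)^{n-k+\frac{\b+2}{q}-\frac{\a+2}{p}} < \infty$, so by Theorem \ref{th10} the operator $T_g^{n,k}: \p0 \to \q0$ is bounded. I would then invoke the sequential compactness criterion stated just before Lemma \ref{2.1}: given a bounded sequence $\{f_j\} \subset \p0$ converging to $0$ uniformly on compact subsets of $\D$, fix $\e > 0$, choose $r\in(0,1)$ so that $|g^{(n-k)}(z)|(1-|z|^2)^{n-k+\frac{\b+2}{q}-\frac{\a+2}{p}} < \e$ on $\D\setminus r\D$, and split the Lemma \ref{lp}(i) integral into pieces over $r\D$ and $\D\setminus r\D$. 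The small-$|z|$ piece is handled through Cauchy estimates applied to $f_j^{(k)}$ on compact subsets, while the large-$|z|$ piece is dominated by $\e^q(U_g^q + \sup_j \|f_j\|_{\tp}^q)$ by exactly the same chain of inequalities used in Theorem \ref{th3}. Arbitrariness of $\e$ then gives $\|T_g^{n,k}f_j\|_{\tq} \to 0$, which by boundedness into $\q0$ is the desired norm convergence.

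For necessity, I would reuse the test family
$$f_j(z) = \frac{(1-|z_j|^2)^{1/(2p)}}{(1-\ol{z_j}z)^{\frac{\a+2}{p}+\frac{1}{2p}}}, \qquad |z_j| \to 1.$$
Each $f_j$ lies in $H^\infty \subset \p0$, the bound $\|f_j\|_{\p0} \lesssim 1$ follows from the same Forelli--Rudin computation via Lemma \ref{2.5} as in Theorem \ref{th3}, and $f_j \to 0$ uniformly on compact subsets of $\D$. Applying the compactness criterion to the assumed compact operator yields $\|T_g^{n,k}f_j\|_{\tq} \to 0$, and then the localized sub-mean-value lower bound from the necessity part of Theorem \ref{th1} (integrating only over the hyperbolic disk $D(z_j,\tfrac12)$ in the Lemma \ref{lp} integral with $a = z_j$) forces $|g^{(n-k)}(z_j)|(1-|z_j|^2)^{n-k+\frac{\b+2}{q}-\frac{\a+2}{p}} \to 0$.

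I do not expect a genuine obstacle; the only point requiring brief verification is that $T_g^{n,k}f_j$ actually belongs to $\q0$ in the sufficiency argument (so its $\q0$ quasinorm coincides with its $\tq$ quasinorm), which is automatic from the boundedness of $T_g^{n,k}: \p0 \to \q0$ already established. The rest is bookkeeping identical to Theorems \ref{th1} and \ref{th3}.
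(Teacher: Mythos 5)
Your argument is correct and follows essentially the same route as the paper: the necessity is exactly the necessity of Theorem \ref{th3} run on the test functions $f_j\in H^\infty\subset AT_{p,0}^\infty(\alpha)$, and your sufficiency re-derives the $\epsilon$-splitting estimate of Theorem \ref{th3} on the little spaces. The paper packages sufficiency even more briefly --- it simply combines boundedness from Theorem \ref{th10} with compactness of $T_g^{n,k}:AT_{p}^\infty(\alpha)\to AT_{q}^\infty(\beta)$ from Theorem \ref{th3} --- but the underlying computation is the one you reproduce.
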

\begin{proof}
{\bf Sufficiency.}
By the assumption, we see that $T_g^{n,k}:AT_{p,0}^\infty(\a)\to AT_{q,0}^\infty(\b)$ is bounded according to Theorem \ref{th10} and $T_g^{n,k}:AT_{p}^\infty(\a)\to AT_{q}^\infty(\b)$ is compact by Theorem \ref{th3}. Hence, $T_g^{n,k}:AT_{p,0}^\infty(\a)\to AT_{q,0}^\infty(\b)$ is compact.

{\bf Necessity.}
The proof of the necessity is conducted in a manner analogous to that employed in the proof of necessity for Theorem \ref{th3}.
The proof is complete.

\end{proof}

\begin{Theorem}\label{th4}
	Let $0<q<p<\infty$, $\a,\b>-2$ with $\frac{p\b-q\a}{p-q}>-2$, $g\in H(\D)$, $n\in\N$ and $k\in\N\cup\{0\}$ such that $0 \leq k<n$.
	Then $T_g^{n,k}:AT_{p}^\infty(\a)\to AT_{q}^\infty(\b)$ is compact if and only if 
	\begin{align*}
	g^{(n-k)}\in AT^\infty_{\frac{pq}{p-q},0}(\lambda),
	\end{align*}
where $\lambda=\frac{p\b-q\a}{p-q}$.
\end{Theorem}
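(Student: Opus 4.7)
The plan follows the template of Theorem \ref{th2} (boundedness in the $q<p$ regime) and Theorem \ref{th3} (compactness in the $p\le q$ regime), replacing ``Carleson'' by ``vanishing Carleson'' throughout and using Lemma \ref{le4}(i) to place the image of $T_g^{n,k}$ inside $\q0$. For sufficiency I split the defining integral for $\|T_g^{n,k}f_j\|_{\tq}$ into an interior piece (handled by uniform convergence of $f_j^{(k)}$ on compacta) and a boundary piece (handled by H\"older plus vanishing Carleson of the $g^{(n-k)}$-measure). For necessity I insert the Rademacher family $(\ref{4.3})$ into compactness, invoke Lemma \ref{le4}, and rerun the Fubini/Khinchine/bounded-overlap argument of Theorem \ref{th2}, with dominated convergence in the $u$-variable converting pointwise-in-$u$ vanishing into the desired little-space conclusion.

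\textbf{Sufficiency.} Assume $g^{(n-k)}\in AT_{pq/(p-q),0}^\infty(\lambda)$ and take a bounded sequence $\{f_j\}\subset\tp$ converging to $0$ uniformly on compacta of $\D$. Since $(T_g^{n,k}f_j)^{(n)}=f_j^{(k)}g^{(n-k)}$, I split the Lemma \ref{lp} integral defining $\|T_g^{n,k}f_j\|_{\tq}^q$ as $\int_{|z|<r}+\int_{|z|\ge r}$. The interior piece is dominated by $C_r\sup_{|z|\le r}|f_j^{(k)}|^q$ and vanishes as $j\to\infty$ for fixed $r$, while the exterior piece, treated by the same H\"older splitting as in $(\ref{4.2})$, factors into $\|f_j\|_{\tp}^q$ times a truncated integral of $|g^{(n-k)}|^{pq/(p-q)}(1-|z|^2)^{(n-k)pq/(p-q)+\lambda+1}\,dA$ against the reproducing kernel. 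The assumption $g^{(n-k)}\in AT_{pq/(p-q),0}^\infty(\lambda)$ makes this measure vanishing Carleson, so by Lemma \ref{2.1} the exterior factor tends to $0$ as $r\to 1^-$. Choosing $r$ first and then letting $j\to\infty$ gives $\|T_g^{n,k}f_j\|_{\tq}\to 0$, hence compactness.

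\textbf{Necessity.} Suppose $T_g^{n,k}:\tp\to\tq$ is compact. Lemma \ref{le4}(i) gives $T_g^{n,k}(\tp)\subset\q0$. Fix $\{x_j\}\in T_p^\infty(Z)$ with $\|\{x_j\}\|_{T_p^\infty(Z)}\le 1$ and let $F_u$ be the Rademacher-randomized function from $(\ref{4.3})$; Lemma \ref{3.4} gives $\sup_{u\in[0,1]}\|F_u\|_{\tp}\lesssim 1$. For each $u$,
\[
g_u(a):=\int_\D\frac{(1-|a|^2)^t}{|1-\ol{a}z|^{t+1}}|F_u^{(k)}(z)|^q|g^{(n-k)}(z)|^q(1-|z|^2)^{nq+\b+1}\,dA(z)
\]
is uniformly bounded in $(a,u)$ by the operator norm and, because $T_g^{n,k}F_u\in\q0$, tends to $0$ as $|a|\to 1^-$. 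Dominated convergence on $[0,1]$ gives $\int_0^1 g_u(a)\,du\to 0$ as $|a|\to 1^-$. Fubini, Khinchine applied to $F_u^{(k)}$, and the bounded-overlap/subharmonic step from Theorem \ref{th2}'s necessity convert this into
\[
\sum_j\frac{(1-|a|^2)^t}{|1-\ol{a_j}a|^{t+1}}|x_j|^q|g^{(n-k)}(a_j)|^q(1-|a_j|^2)^{q(n-k)+\b+3-q(\a+2)/p}\longrightarrow 0\quad\text{as}\ |a|\to 1^-.
\]
With $y_j:=g^{(n-k)}(a_j)(1-|a_j|^2)^{n-k+\frac{\b+2}{q}-\frac{\a+2}{p}}$ this says $\{x_jy_j\}\in T_{q,0}^\infty(Z)$. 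Since $\{x_j\}$ was arbitrary, Lemma \ref{2.2}(iii) forces $\{y_j\}\in T_{pq/(p-q),0}^\infty(Z)$, and Lemma \ref{3.2}(ii) yields $g^{(n-k)}\in AT_{pq/(p-q),0}^\infty(\lambda)$.

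The main obstacle is the handling of the $u$-variable in the necessity: neither pointwise-in-$u$ vanishing nor uniform-in-$u$ boundedness of $g_u(a)$ alone suffices, but together they license dominated convergence on $[0,1]$, which is exactly what feeds Khinchine and the bounded-overlap trick. It is compactness (through Lemma \ref{le4}) that supplies the pointwise-in-$u$ vanishing; merely knowing that $T_g^{n,k}$ is bounded into $\tq$ would not force $T_g^{n,k}F_u\in\q0$ for individual $u$.
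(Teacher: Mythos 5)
Your proof follows the paper's argument essentially verbatim: the same splitting of the Littlewood--Paley integral into $r\D$ and $\D\setminus r\D$ with Lemma \ref{2.1} supplying the vanishing Carleson estimate for sufficiency, and the same chain (Lemma \ref{le4}, the Rademacher test functions (\ref{4.3}), Fubini/Khinchine, bounded overlap, Lemma \ref{2.2}(iii), Lemma \ref{3.2}) for necessity. Your explicit dominated-convergence step in the $u$-variable is a welcome tightening of a uniformity point the paper leaves implicit, but it does not constitute a different route.
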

\begin{proof}
{\bf Sufficiency.}
Assume that $g^{(n-k)}\in AT^\infty_{\frac{pq}{p-q},0}\left(\lambda\right)$ holds. Fix $\epsilon>0$.  According to Lemma \ref{2.1}, there exists $r\in(0,1)$ such that 
$$
\sup_{a\in\D}\int_{\D\backslash r\D}\frac{(1-|a|^2)^t}{|1-\ol{a}z|^{t+1}}|g^{(n-k)}(z)|^{\frac{pq}{p-q}}(1-|z|^2)^{(n-k)\frac{pq}{p-q}+\frac{p\b-q\a}{p-q}+1}dA(z)<\epsilon^{\frac{pq}{p-q}}
$$
for sufficiently large $t>0$.
Consider the bounded sequence \(\{f_j\} \subset AT_p^\infty(\alpha)\) that converges to 0 uniformly on compact subsets of \(\mathbb{D}\). We aim to show that \(\|T_g^{n,k}f_j\|_{AT_q^\infty(\beta)} \to 0\) as $j\to\infty$. There exists an \(M > 0\) such that for all \(j > M\), we have \(|f_j^{(k)}(z)| < \epsilon\) for \(z \in r\mathbb{D}\).
Subsequently, employing H\"older's inequality along with \cite[Lemma 3.10]{z1}, we derive that
\begin{align*}
&\|T_g^{n,k}f_j\|^q_{\tq}\\
\asymp&\sup_{a\in\D}\left( \int_{r\D}+\int_{\D\backslash r\D}  \right)\frac{(1-|a|^2)^t}{|1-\ol{a}z|^{t+1}}|f^{(k)}_j(z)|^q|g^{(n-k)}(z)|^q(1-|z|^2)^{nq+\b+1}dA(z)\\
\lesssim&\epsilon^q\left(\sup_{a\in\D}  \int_{r\D}\frac{(1-|a|^2)^t}{|1-\ol{a}z|^{t+1}}|g^{(n-k)}(z)|^{\frac{pq}{p-q}}(1-|z|^2)^{(n-k)\frac{pq}{p-q}+\frac{p\b-q\a}{p-q}+1}dA(z) \right)^{\frac{p-q}{p}}\\
&\cdot\left( \sup_{a\in\D}  \int_{r\D}\frac{(1-|a|^2)^t}{|1-\ol{a}z|^{t+1}}(1-|z|^2)^{kp+\a+1}dA(z) \right)^{\frac{q}{p}}
\end{align*}

\begin{align*}
&+\left( \sup_{a\in\D}  \int_{\D\backslash  r\D}\frac{(1-|a|^2)^t}{|1-\ol{a}z|^{t+1}}|f_j^{(k)}(z)|^p(1-|z|^2)^{kp+\a+1}dA(z) \right)^{\frac{q}{p}}\\
&\cdot\left(\sup_{a\in\D}  \int_{\D\backslash r\D}\frac{(1-|a|^2)^t}{|1-\ol{a}z|^{t+1}}|g^{(n-k)}(z)|^{\frac{pq}{p-q}}(1-|z|^2)^{(n-k)\frac{pq}{p-q}+\frac{p\b-q\a}{p-q}+1}dA(z) \right)^{\frac{p-q}{p}}\\
\lesssim&\epsilon^q\left(\|g^{(n-k)}\|^q_{AT^\infty_{\frac{pq}{p-q}}\left(\lambda\right)} +\sup_{j}\|f_j\|^q_{AT_p^\infty(\a)}   \right)
\end{align*}
whenever $j>M$ and $t>\a+2+kp$. Given that \(\epsilon > 0\) is arbitrary, we deduce that \(\|T_g^{n,k}f_j\|_{AT_q^\infty(\beta)} \to 0\) as $j\to\infty$, which implies that the operator \(T_g^{n,k}: AT_p^\infty(\alpha) \to AT_q^\infty(\beta)\) is compact.

{\bf Necessity.}
Let \(\epsilon > 0\). If the operator \(T_g^{n,k}: AT_p^\infty(\alpha) \to AT_q^\infty(\beta)\) is compact, then by Lemma \ref{le4}, we have $T_g^{n,k}(\tp)\subset\q0$. Define \(F_u\) as (\ref{4.3}) for a sequence \(\{x_j\} \in T_p^\infty(Z)\). Since $T_g^{n,k}(F_u)\in \q0$, there exists a \(r \in (0,1)\) such that
\begin{align*}
\int_{\D}\frac{(1-|a|^2)^t}{|1-\ol{a}z|^{t+1}}|F_u^{(k)}(z)|^q|g^{(n-k)}(z)|^q(1-|z|^2)^{nq+\b+1}dA(z)<\epsilon^q
\end{align*}
whenever $r<|a|<1$. Similar to the proof of Theorem \ref{th2}, we have
\begin{align*}
\{x_jg^{(n-k)}(a_j)(1-|a_j|^2)^{n-k+\frac{\b+2}{q}-\frac{\a+2}{p}}\}\in T_{q,0}^\infty(Z).
\end{align*}
According to Lemma \ref{2.2} $(iii)$, we deduce that
\begin{align*}
	\{g^{(n-k)}(a_j)(1-|a_j|^2)^{n-k+\frac{\b+2}{q}-\frac{\a+2}{p}}\}\in T_{\frac{pq}{p-q},0}^\infty(Z).
\end{align*}
Combining this with Lemmas \ref{3.2} and \ref{lp}, we get that
\begin{align*}
	g^{(n-k)}\in AT^\infty_{\frac{pq}{p-q},0}\left(\lambda\right).
\end{align*}
The proof is complete.

\end{proof}

\begin{Theorem}\label{th40}
	Let $0<q<p<\infty$, $\a,\b>-2$ with $\frac{p\b-q\a}{p-q}>-2$, $g\in H(\D)$, $n\in\N$ and $k\in\N\cup\{0\}$ such that $0 \leq k<n$.
	Then $T_g^{n,k}:AT_{p,0}^\infty(\a)\to AT_{q,0}^\infty(\b)$ is compact if and only if 
	\begin{align*}
	g^{(n-k)}\in AT^\infty_{\frac{pq}{p-q},0}\left(\lambda\right),
	\end{align*}
where $\lambda=\frac{p\b-q\a}{p-q}$.
\end{Theorem}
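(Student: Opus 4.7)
For the sufficiency direction, my plan is to combine Theorem \ref{th4} (which gives compactness of $T_g^{n,k}:\tp\to\tq$ under the hypothesis, noting that $AT^\infty_{\frac{pq}{p-q},0}(\lambda)\subset AT^\infty_{\frac{pq}{p-q}}(\lambda)$) with Theorem \ref{th20} (which under the same hypothesis yields boundedness of $T_g^{n,k}:\p0\to\q0$). The restriction of the compact operator $T_g^{n,k}:\tp\to\tq$ to the closed subspace $\p0\subset\tp$ is automatically compact into $\tq$, and the aforementioned boundedness ensures the image lies in the closed subspace $\q0\subset\tq$. Hence $T_g^{n,k}:\p0\to\q0$ is compact.

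For the necessity direction, I will mirror the necessity argument of Theorem \ref{th4}, with two key adjustments. Since we only assume compactness on $\p0$, the inclusion $T_g^{n,k}(\tp)\subset\q0$ is supplied by Lemma \ref{le4}(ii) rather than (i). Moreover, the conclusion we seek is membership in the little space $T_{\frac{pq}{p-q},0}^\infty(Z)$, so Lemma \ref{2.2}(iii) replaces Lemma \ref{2.2}(i) at the final discretization step. As a preliminary, I note that compactness of $T_g^{n,k}:\p0\to\q0$ forces boundedness of $T_g^{n,k}:\tp\to\tq$: compactness yields boundedness on $\p0$, so Theorem \ref{th20} gives $g^{(n-k)}\in AT^\infty_{\frac{pq}{p-q}}(\lambda)$, and then Theorem \ref{th2} delivers boundedness on the full tent space. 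This is needed below to produce a uniform dominant.

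Take an arbitrary $(r,\kappa)$-lattice $Z=\{a_j\}$ and an arbitrary sequence $\{x_j\}\in T_p^\infty(Z)$, and form the randomized atomic function $F_u$ as in $(\ref{4.3})$. By Lemma \ref{3.4}, $\|F_u\|_{\tp}\lesssim\|\{x_j\}\|_{T_p^\infty(Z)}$ uniformly in $u\in[0,1]$, and $F_u\in\tp$, so $T_g^{n,k}F_u\in\q0$ by Lemma \ref{le4}(ii). For sufficiently large $t>0$ set
$$
\Phi_u(a)=\int_{\D}\frac{(1-|a|^2)^t}{|1-\ol{a}z|^{t+1}}|F_u^{(k)}(z)|^q|g^{(n-k)}(z)|^q(1-|z|^2)^{nq+\b+1}dA(z).
$$
By Lemma \ref{lp}(ii), $\Phi_u(a)\to 0$ as $|a|\to 1$ for each fixed $u$; the uniform bound $\Phi_u(a)\lesssim\|T_g^{n,k}F_u\|_{\tq}^q\lesssim\|\{x_j\}\|_{T_p^\infty(Z)}^q$ then permits dominated convergence on $[0,1]$, yielding $\int_0^1\Phi_u(a)\,du\to 0$ as $|a|\to 1$. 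From here the same chain of manipulations used in the necessity of Theorem \ref{th4}---Fubini's theorem, Khinchine's inequality, the disjointness of $D(a_j,\kappa)$, and the subharmonicity estimate \cite[Proposition 4.13]{z1}---leads, for any $\epsilon>0$ and all $a$ sufficiently close to $\mathbb{T}$, to $\{x_jg^{(n-k)}(a_j)(1-|a_j|^2)^{n-k+\frac{\b+2}{q}-\frac{\a+2}{p}}\}\in T_{q,0}^\infty(Z)$. Since $\{x_j\}\in T_p^\infty(Z)$ was arbitrary, Lemma \ref{2.2}(iii) gives $\{g^{(n-k)}(a_j)(1-|a_j|^2)^{n-k+\frac{\b+2}{q}-\frac{\a+2}{p}}\}\in T_{\frac{pq}{p-q},0}^\infty(Z)$, and Lemmas \ref{3.2}(ii) and \ref{lp}(ii) then conclude $g^{(n-k)}\in AT^\infty_{\frac{pq}{p-q},0}(\lambda)$.

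The main obstacle is promoting the pointwise-in-$u$ vanishing of $\Phi_u(a)$ to a statement uniform on $[0,1]$, which is essential before invoking Khinchine's inequality. My plan resolves this via dominated convergence, with dominating function controlled by the operator norm of $T_g^{n,k}$ on the full tent space $\tp$; this is why the preliminary promotion from compactness on $\p0$ to boundedness on $\tp$ (via Theorems \ref{th20} and \ref{th2}) is an indispensable first step rather than a cosmetic one.
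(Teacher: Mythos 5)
Your proposal is correct and follows essentially the same route as the paper: sufficiency by combining Theorem \ref{th20} (boundedness on the little spaces) with Theorem \ref{th4} (compactness on the full spaces), and necessity by repeating the randomized atomic-decomposition argument with Lemma \ref{le4} supplying $T_g^{n,k}(\tp)\subset\q0$ and Lemma \ref{2.2}(iii) at the discretization step. Your dominated-convergence step to make the vanishing of $\Phi_u(a)$ uniform in $u$ before applying Khinchine's inequality is a welcome extra precision that the paper leaves implicit, but it does not change the underlying argument.
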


\begin{proof}
{\bf Sufficiency.}
By the assumption, the operator $T_g^{n,k}:AT_{p,0}^\infty(\a)\to AT_{q,0}^\infty(\b)$ is bounded according to Theorem \ref{th20} and $T_g^{n,k}:AT_{p}^\infty(\a)\to AT_{q}^\infty(\b)$ is compact by Theorem \ref{th4}. Therefore, it follows that $T_g^{n,k}:AT_{p,0}^\infty(\a)\to AT_{q,0}^\infty(\b)$ is compact.

{\bf Necessity.}
The proof of necessity is similar to the proof of necessity in Theorem \ref{th4}.
We omit the detail of the proof.
The proof is complete.

\end{proof}

Subsequently, we examine the compactness of the operator \( S_g^{n,0} \) from \( \tp \) to \( \tq \) and from \( \p0 \) to \( \q0 \) when \( 0 < p,q < \infty \).
Since the proofs for operators $T_g^{n,k}$ and $S_g^{n,0}$ are similar, we omit the proofs here and only present the conclusions.

\begin{Theorem}\label{sn3}
	Let $0<p\le q<\infty$, $\a,\b>-2$ with $\frac{p\b-q\a}{p-q}>-2$, $g\in H(\D)$ and $n\in\NN$. Then the following statements are equivalent.
	\begin{enumerate}
		\item[(i)] $S_g^{n,0}:\tp\to\tq$ is compact.
		
		\item[(ii)] $S_g^{n,0}:\p0\to\q0$ is compact.
		
		\item[(iii)] $
		\lim_{|z|\to1}|g(z)|(1-|z|^2)^{\frac{\b+2}{q}-\frac{\a+2}{p}}=0.
		$
	\end{enumerate}
\end{Theorem}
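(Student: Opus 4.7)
The plan is to mirror the arguments of Theorems \ref{th3} and \ref{th30}, adapted to the operator $S_g^{n,0}$. The structural simplification is the identity $(S_g^{n,0}f)^{(n)}(z)=f^{(n)}(z)g(z)$, so by Lemma \ref{lp},
\begin{align*}
\|S_g^{n,0}f\|_{\tq}^q\asymp\sup_{a\in\D}\int_{\D}\frac{(1-|a|^2)^t}{|1-\ol{a}z|^{t+1}}|f^{(n)}(z)|^q|g(z)|^q(1-|z|^2)^{nq+\b+1}dA(z).
\end{align*}

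For (iii)$\Rightarrow$(i): given $\epsilon>0$, pick $r\in(0,1)$ so that $|g(z)|(1-|z|^2)^{\frac{\b+2}{q}-\frac{\a+2}{p}}<\epsilon$ whenever $|z|\ge r$. Consider a bounded sequence $\{f_j\}\subset\tp$ tending to $0$ uniformly on compact subsets; Cauchy estimates give $f_j^{(n)}\to 0$ uniformly on $r\D$. Split the integral as $\int_{r\D}+\int_{\D\setminus r\D}$. On the inner piece, use uniform convergence of $f_j^{(n)}$ together with the boundedness quantity $U_g:=\sup_{z\in\D}|g(z)|(1-|z|^2)^{\frac{\b+2}{q}-\frac{\a+2}{p}}<\infty$ (from Theorem \ref{sn1}). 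On the outer piece, pull out $|g(z)|^q(1-|z|^2)^{(\frac{\b+2}{q}-\frac{\a+2}{p})q}<\epsilon^q$ and control the resulting integral by $\|f_j\|_{\tp}^q$ using Lemma \ref{z} (to absorb the extra $(q-p)/p$-power when $p<q$) and Lemma \ref{lp}, exactly as in Theorem \ref{th3}. This yields $\|S_g^{n,0}f_j\|_{\tq}\to 0$, hence compactness on $\tp\to\tq$. Combined with the boundedness $S_g^{n,0}:\p0\to\q0$ from Theorem \ref{sn1} and the inclusion $\p0\subset\tp$, the same estimate also gives compactness on the little-oh spaces, establishing (iii)$\Rightarrow$(ii); the image automatically lies in $\q0$ by Lemma \ref{le3s}.

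For (i)$\Rightarrow$(iii) and (ii)$\Rightarrow$(iii): fix $\{z_j\}\subset\D$ with $|z_j|\to 1$ and set
\begin{align*}
f_j(z)=\frac{(1-|z_j|^2)^{\frac{1}{2p}}}{(1-\ol{z_j}z)^{\frac{\a+2}{p}+\frac{1}{2p}}},\quad z\in\D.
\end{align*}
A computation using Lemma \ref{2.5} shows $\|f_j\|_{\tp}\lesssim 1$, while $f_j\in H^\infty\subset\p0$ and $f_j\to 0$ uniformly on compact subsets, so either compactness hypothesis forces $\|S_g^{n,0}f_j\|_{\tq}\to 0$. Taking $a=z_j$ in the supremum, restricting to $D(z_j,\tfrac{1}{2})$, using the pointwise estimate $|f_j^{(n)}(z)|\asymp(1-|z_j|^2)^{-\frac{\a+2}{p}-n}$ valid there, and invoking subharmonicity of $|g|^q$, one obtains
\begin{align*}
\|S_g^{n,0}f_j\|_{\tq}^q\gtrsim|g(z_j)|^q(1-|z_j|^2)^{\left(\frac{\b+2}{q}-\frac{\a+2}{p}\right)q},
\end{align*}
which forces (iii).

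The main obstacle is bookkeeping rather than a new idea: in the sufficiency step, when $p<q$, one must verify that after extracting $\epsilon^q$ from $|g|^q(1-|z|^2)^{(\frac{\b+2}{q}-\frac{\a+2}{p})q}$ on $\D\setminus r\D$, the residual exponents of $(1-|z|^2)$ line up with those required by Lemma \ref{lp}(i), so that Lemma \ref{z} can convert the remaining $q$-th power integrand into a $p$-th power integrand controlled by $\|f_j\|_{\tp}^p$. This is precisely the arithmetic carried out in Theorem \ref{th3}, and no new machinery is required.
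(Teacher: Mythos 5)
Your proposal is correct and follows exactly the route the paper intends: the paper omits this proof, stating that it parallels Theorems \ref{th3} and \ref{th30}, and your argument is precisely that adaptation, using $(S_g^{n,0}f)^{(n)}=f^{(n)}g$ in place of $f^{(k)}g^{(n-k)}$, with the exponent bookkeeping via Lemmas \ref{lp} and \ref{z} working out as you describe. The only cosmetic remark is that the membership of the image in $AT_{q,0}^\infty(\beta)$ for (iii)$\Rightarrow$(ii) is already supplied by the boundedness statement of Theorem \ref{sn1}(ii) rather than by Lemma \ref{le3s}, which only covers $H^\infty$.
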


\begin{Theorem}\label{sn4}
	Let $0<q<p<\infty$, $\a,\b>-2$ with $\frac{p\b-q\a}{p-q}>-2$, $g\in H(\D)$ and $n\in\NN$. Then the following statements are equivalent.
	\begin{enumerate}
		\item[(i)] $S_g^{n,0}:\tp\to\tq$ is compact.
		
		\item[(ii)] $S_g^{n,0}:\p0\to\q0$ is compact.
		
		\item[(iii)] 
		$
		g\in AT^\infty_{\frac{pq}{p-q},0}(\frac{p\b-q\a}{p-q}).
		$
	\end{enumerate}
\end{Theorem}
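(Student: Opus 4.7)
The plan is to establish the equivalence (iii) $\Leftrightarrow$ (i) by mirroring the proof of Theorem \ref{th4} for $T_g^{n,k}$, and then to bridge to (ii) via Theorem \ref{sn2} and Lemma \ref{le4s}. The key observation is that $(S_g^{n,0}f)^{(n)}(z)=f^{(n)}(z)g(z)$, so Lemma \ref{lp} reduces $\|S_g^{n,0}f\|_{\tq}^q$ to a supremum of Berezin-type integrals involving $|f^{(n)}(z)|^q|g(z)|^q(1-|z|^2)^{nq+\beta+1}$. This parallels the $T_g^{n,k}$ analysis under the replacements $f^{(k)}\to f^{(n)}$ and $g^{(n-k)}\to g$, and it makes the hypothesis $\frac{p\beta-q\alpha}{p-q}>-2$ (needed for the Hölder split below) play exactly the role it plays in Theorem \ref{th4}.

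For (iii) $\Rightarrow$ (i), assume $g\in AT^\infty_{\frac{pq}{p-q},0}(\lambda)$ with $\lambda=\frac{p\beta-q\alpha}{p-q}$. I would first use Lemma \ref{2.1} to choose $r\in(0,1)$ making the Carleson norm of $d\mu_g(z)=|g(z)|^{pq/(p-q)}(1-|z|^2)^{\lambda+1}dA(z)$ restricted to $\D\setminus r\D$ arbitrarily small. Given a bounded sequence $\{f_j\}\subset\tp$ converging to $0$ uniformly on compact subsets of $\D$, I would split the Berezin integral from Lemma \ref{lp} over $r\D$ and $\D\setminus r\D$. On $r\D$, Lemma \ref{z} (or direct uniform convergence) controls $|f_j^{(n)}|$, and Hölder's inequality pairs this against the full $AT^\infty_{\frac{pq}{p-q}}(\lambda)$-norm of $g$; on $\D\setminus r\D$, Hölder's inequality lets the small tail absorb the $g$-factor while the boundedness of $\{f_j\}$ in $\tp$ absorbs the $f_j^{(n)}$-factor. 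This is precisely the template of the sufficiency part of Theorem \ref{th4}.

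For (i) $\Rightarrow$ (iii), I would invoke Lemma \ref{le4s} first to obtain $S_g^{n,0}(\tp)\subset\q0$. Then, with $\{x_j\}\in T_p^\infty(Z)$, $L>\max\{1,1/p\}$, and Rademacher functions $r_j$, I would test on the randomized atom
\[
F_u(z)=\sum_{j=1}^\infty x_j r_j(u)\frac{(1-|a_j|^2)^L}{(1-\ol{a_j}z)^{L+\frac{\alpha+2}{p}}},
\]
whose $\tp$-norm is controlled by $\|\{x_j\}\|_{T_p^\infty(Z)}$ via Lemma \ref{3.4}. Since $S_g^{n,0}F_u\in\q0$, Lemma \ref{lp} forces the associated Berezin integral to vanish as $|a|\to 1^-$. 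Integrating in $u\in[0,1]$, applying Fubini and Khinchine's inequality, and localizing with $\chi_{D(a_j,\rho)}$ together with the sub-mean-value inequality, I would extract
\[
\bigl\{x_j g(a_j)(1-|a_j|^2)^{\frac{\beta+2}{q}-\frac{\alpha+2}{p}}\bigr\}\in T_{q,0}^\infty(Z).
\]
As $\{x_j\}\in T_p^\infty(Z)$ is arbitrary, Lemma \ref{2.2}(iii) upgrades this to $\{g(a_j)(1-|a_j|^2)^{\frac{\beta+2}{q}-\frac{\alpha+2}{p}}\}\in T_{pq/(p-q),0}^\infty(Z)$, and Lemmas \ref{3.2} and \ref{lp} conclude $g\in AT^\infty_{\frac{pq}{p-q},0}(\lambda)$.

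The bridge to (ii) is light. For (iii) $\Rightarrow$ (ii), (iii) yields compactness on the big spaces by the above, and Theorem \ref{sn2} yields boundedness $S_g^{n,0}:\p0\to\q0$; since $\p0\subset\tp$ and, by Lemma \ref{le4s}, the image lies in $\q0$, compactness restricts to the little spaces. For (ii) $\Rightarrow$ (iii), I would rerun the necessity argument above, but with the atomic decomposition of Lemma \ref{3.6} producing $F_u\in\p0$, so that compactness of $S_g^{n,0}:\p0\to\q0$ may be applied in place of compactness on the big space. The principal obstacle is the randomization-and-localization step in the necessity direction, where the vanishing Carleson property must be tracked through Khinchine's inequality and the hyperbolic sub-mean-value estimates; however, the relevant machinery is fully developed in the proofs of Theorems \ref{th2}, \ref{th4}, and transfers verbatim once $g^{(n-k)}$ is replaced by $g$ and the exponents are reset accordingly.
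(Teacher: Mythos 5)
Your proposal is correct and follows exactly the route the paper intends: the paper omits the proof of Theorem \ref{sn4}, stating that it is obtained from the proofs of Theorems \ref{th4} and \ref{th40} (together with Lemma \ref{le4s}) by replacing $f^{(k)}$ with $f^{(n)}$ and $g^{(n-k)}$ with $g$, which is precisely the substitution you carry out, and your exponent bookkeeping (yielding $\{g(a_j)(1-|a_j|^2)^{\frac{\b+2}{q}-\frac{\a+2}{p}}\}$ and hence $\lambda=\frac{p\b-q\a}{p-q}$) is consistent with Lemma \ref{3.2}.
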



\begin{thebibliography}{aa}
		
		\bibitem{as1}
		A. Aleman and A. G. Siskakis, An integral operator on $H^p$, {\it Complex Var. Theory Appl.} {\bf28} (1995), 149--158.
		
		
		\bibitem{as2}
		A. Aleman and  A. G. Siskakis, Integration operators on Bergman spaces, {\it Indiana Univ. Math. J.} {\bf46} (1997), 337--356.
		
		
		
		\bibitem{ca1}
		L. Carleson, An interpolation problem for bounded analytic functions, {\it Amer. J. Math.}  {\bf80} (1958), 921--930.
		
		
		\bibitem{ca2}
		L. Carleson, Interpolations by bounded analytic functions and the corona problem, {\it Ann. Math.} {\bf76} (1962), 547--559.
		
		
		\bibitem{ch}
		N. Chalmoukis, Generalized integration operators on Hardy spaces, {\it Proc. Amer. Math. Soc.} {\bf148} (2020), no. 8, 3325--3337.
		
		
		
		\bibitem{cw}
		J. Chen and M. Wang, Integration operators on Hardy-Carleson type tent spaces, {\it J. Math. Anal. Appl.} {\bf 506} (2022), no. 1, Paper No. 125562, 25 pp.
		

		
		
		\bibitem{cms}
		P. Coifman, Y. Meyer and E. Stein, Some new function spaces and their applications to harmonic analysis, {\it J. Funct. Anal.} {\bf62} (1985), no. 2, 304–335. 
		
		
		\bibitem{cm}
		C. Cowen and B. MacCluer, {\it Composition Operators on Spaces of Analytic Functions}, Studies in Advanced Mathematics. CRC Press, Boca Raton, FL, 1995. xii+388 pp. 
		
		
		
		
		
		\bibitem{dlq}
		J. Du, S. Li and D. Qu, The generalized Volterra integral operator and Toeplitz operator on weighted Bergman spaces, {\it Mediterr. J. Math.} {\bf19} (2022), no. 6, Paper No. 263, 32 pp.
		
		

		
		\bibitem{hyl} 
		L. Hu, R. Yang and S. Li, Dirichlet-Morrey type spaces and Volterra integral operators, {\it J. Nonlinear Var. Anal.} {\bf 5} (2021), 477--491.
		
		
		
		
		\bibitem{lly}
		S. Li, J. Liu and C. Yuan, Embedding theorem for Dirichlet type spaces, {\it Canad. Math. Bull.} {\bf 63} (2020), 106--117.
		
		
		\bibitem{llw} 
		Q. Lin, J. Liu and Y. Wu, Volterra type operators on $S^p(\DD)$ spaces, {\it J. Math. Anal. Appl.} {\bf 461} (2018), 1100--1114.
		
		
		
	    \bibitem{ll} 
		J. Liu and  Z. Lou,  Carleson measure for analytic Morrey spaces, {\it Nonlinear Anal.}  \textbf{125} (2015), 423--432.
		
		
		
		
		
		\bibitem{l3}
		D. Luecking, Embedding theorem for spaces of analytic functions via Khinchine's inequality, {\it Mich. Math. J.} {\bf40} (1993), 333--358.
		
		
		
		\bibitem{mppw}
		S. Miihkinen, J. Pau, A. Per\"al\"a and M. Wang, Volterra type integration operators from Bergman spaces to Hardy spaces, {\it J. Funct. Anal.} {\bf 279} (2020), no. 4, 108564, 32 pp.
		
		
		\bibitem{of}
		J. Ortega and J. F\`abrega, 
		Pointwise multipliers and corona type decomposition in BMOA,
		{\it  Ann. Inst. Fourier (Grenoble) } {\bf46} (1996), no. 1, 111--137.
	
		
		
		\bibitem{p}
		Ch. Pommerenke, Schlichte Funktionen und Funktionen von beschrankter mittler Oszilation, {\it Comment. Math. Helv.} {\bf52} (1977), 122--129.
		
		
		
		\bibitem{qz}
		R. Qian and X. Zhu, Embedding Hardy spaces $H^p$ into tent spaces and generalized integration operators, {\it Ann. Polon. Math.} {\bf 128} (2022), no. 2, 143--157.
		
		
		
		
		\bibitem{wz2}
		M. Wang and L. Zhou, Embedding derivatives and integration operators on Hardy type tent spaces, {\it Acta Math. Sin.} {\bf38} (2022), no. 6, 1069--1093. 
		
	
		
		
		\bibitem{w2}
		Z. Wu, Volterra operator, area integral and Carleson measures, {\it Sci. China Math.} {\bf 54} (2011), 2487--2500.
		
		
		
		
		\bibitem{yhl}
		R. Yang, L. Hu and S. Li, Generalized integration operators on analytic tent spaces, {\it Mediterr. J. Math.} {\bf 21} (2024), no. 6, Paper No. 177, 22 pp.
		
		
		
		\bibitem{yl}
		R. Yang and S. Li, 
		On a conjecture about generalized integration operators on Hardy spaces,
		arXiv:2405.19372. 
		
		
		
		
		\bibitem{z1}
		K. Zhu, {\it Operator Theory in Function Spaces}, Second edition. Mathematical Surveys and Monographs, 138. American Mathematical Society, Providence, RI, 2007. xvi+348 pp.
		
		
		\bibitem{z2}
		K. Zhu, {\it Spaces of Functions in the Unit Ball}, Graduate Texts in Mathematics, 226. Springer-Verlag, New York, 2005. x+271 pp. 

	
		
		
		
	\end{thebibliography}
\end{document}